\DeclareFontFamily{U}{matha}{\hyphenchar\font45}
\DeclareFontShape{U}{matha}{m}{n}{
  <5> <6> <7> <8> <9> <10> gen * matha
  <10.95> matha10 <12> <14.4> <17.28> <20.74> <24.88> matha12
  }{}
\DeclareSymbolFont{matha}{U}{matha}{m}{n}
\DeclareFontFamily{U}{mathx}{\hyphenchar\font45}
\DeclareFontShape{U}{mathx}{m}{n}{
  <5> <6> <7> <8> <9> <10>
  <10.95> <12> <14.4> <17.28> <20.74> <24.88>
  mathx10
  }{}
\DeclareSymbolFont{mathx}{U}{mathx}{m}{n}
\DeclareMathSymbol{\obot}         {2}{matha}{"6B}
\DeclareMathSymbol{\bigobot}       {1}{mathx}{"CB}
\numberwithin{equation}{section}
\theoremstyle{plain}
\newtheorem{proposition}{Proposition}[section]
\newtheorem{conj}[proposition]{Conjecture}
\newtheorem{cor}[proposition]{Corollary}
\newtheorem{lem}[proposition]{Lemma}
\newtheorem{thm}[proposition]{Theorem}
\newtheorem{prop}[proposition]{Proposition}
\theoremstyle{definition}
\newtheorem{eg}[proposition]{Example}
\theoremstyle{remark}
\newtheorem{rmk}[proposition]{Remark}
\numberwithin{equation}{section}
  \newcommand{\BA}{{\mathbb {A}}} 
    \newcommand{\BC}{{\mathbb {C}}} 
     \newcommand{\BH}{{\mathbb {H}}}
    \newcommand{\BQ}{{\mathbb {Q}}} \newcommand{\BR}{{\mathbb {R}}}
     \newcommand{\BZ}{{\mathbb {Z}}}
    \newcommand{\cK}{{\mathcal {K}}} 
    \newcommand{\cO}{{\mathcal {O}}}
    \newcommand{\RM}{{\mathrm {M}}}
    \newcommand{\fc}{{\mathfrak{c}}} 
    \newcommand{\fe}{{\mathfrak{e}}}
    \newcommand{\fm}{{\mathfrak{m}}} 
     \newcommand{\fp}{{\mathfrak{p}}}
    \newcommand{\ol}{\overline}
    \newcommand{\pair}[1]{\langle {#1} \rangle}
    \newcommand{\incl}{\hookrightarrow}
    \newcommand{\bsl}{\backslash}
  \newcommand{\ep}{\epsilon}
  \newcommand{\vpl}{\varprojlim}
 \newcommand{\vil}{\varinjlim}  
 \newcommand{\lb}{\left(} \newcommand{\rb}{\right)}
    \newcommand{\ad}{{\mathrm{ad}}}
    \newcommand{\Ch}{{\mathrm{Ch}}}
    \newcommand{\End}{{\mathrm{End}}}
     \newcommand{\GL}{{\mathrm{GL}}}
    \newcommand{\Hom}{{\mathrm{Hom}}}
    \newcommand{\Mor}{{\mathrm{Mor}}}
    \newcommand{\ord}{{\mathrm{ord}}} 
    \newcommand{\PGL}{{\mathrm{PGL}}}
\newcommand{\rf}{{\mathrm{f}}}
    \newcommand{\sym}{{\mathrm{sym}}}
      \newcommand{\pr}{{\mathfrak{pr}}}
\newcommand\supervisor[1]{\def\@supervisor{#1}}
\newcounter{elno}
\renewcommand{\cong}{\simeq}
 \author{Congling Qiu}
\begin{document}
   \title{Hyperelliptic Shimura curves and
 $L$-functions of  central vanishing order at least 3}
 \begin{abstract} 

 We use hyperelliptic Shimura curves to find
  triple product $L$-functions  of   Hilbert newforms  
with 
   central vanishing orders proved to be at least 3.  
  
 \end{abstract} 
 \maketitle

\section{Introduction}

In \cite{GZ,Zag},  Gross and Zagier proved that the $L$-function  of  a  certain modular elliptic curve over $\BQ$  
 has central vanishing order    3. 
This was essential in Goldfeld's  effective solution to the Gauss class number problem for imaginary quadratic fields  \cite{Gold}.
According to folklore conjectures,  such $L$-functions with higher central vanishing orders are very rare.  Moreover, such vanishing can not be proved by numerically computation. Thus, Gross and Zagier's example  is remarkable. 
In this paper, we present some analogs of Gross and Zagier's example in triple product $L$-functions of abelian varieties.

 \begin{thm}
\label{thmintro} 

 Let   $F=\BQ(\zeta_{32}+\zeta_{32}^{-1})$, the maximal totally real subfield of the cyclotomic field of 32nd roots of unity.
Let $\fp$ be the unique prime of $F$ above 2. 
Let $A_1,A_2,A_3$ be pairwise non-isomorphic 4-dimensional simple  modular abelian varieties of   $\GL_2$-type over $F$   and of conductor $\fp$.
 Then  
$$\ord_{s=2}L\lb s, h^1(A_1)\otimes  h^1(A_2)\otimes  h^1(A_3) \rb\geq 3\cdot 4^3.$$
\end{thm}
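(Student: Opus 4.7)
The plan is to realize all three abelian varieties as isogeny factors of the Jacobian of a single hyperelliptic Shimura curve over $F$, then apply the arithmetic triple product formula of Gross--Kudla--Schoen in the Yuan--Zhang--Zhang form, and exploit the hyperelliptic involution to force the heights of the relevant diagonal cycles to vanish. Concretely, the main geometric input (and the principal obstacle) is to produce a quaternion algebra $B$ over $F$, ramified at seven of the eight real places, together with a level structure of conductor $\fp$, whose associated Shimura curve $X$ is hyperelliptic and has Jacobian containing $A_1$, $A_2$, $A_3$ as isogeny factors. The peculiar choice $F=\BQ(\zeta_{32}+\zeta_{32}^{-1})$, together with $\fp$ the unique prime above $2$, is made precisely so that such a hyperelliptic curve $X$ carrying three distinct four-dimensional simple $\GL_2$-type quotients exists.

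Granting this, the Jacquet--Langlands correspondence attaches to each $A_i$ a Hilbert newform $\pi_i$ over $F$ of parallel weight $2$ and conductor $\fp$, with Hecke field $E_i\subset\End^0_F(A_i)$ of degree $4$. One factors
\begin{equation*}
L\bigl(s,h^1(A_1)\otimes h^1(A_2)\otimes h^1(A_3)\bigr)=\prod_{(\sigma_1,\sigma_2,\sigma_3)}L\bigl(s,\pi_1^{\sigma_1}\otimes \pi_2^{\sigma_2}\otimes \pi_3^{\sigma_3}\bigr),
\end{equation*}
a product of $4^3$ triple product $L$-functions indexed by $(\sigma_1,\sigma_2,\sigma_3)\in\prod_i\Hom(E_i,\BC)$. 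Since each $\pi_i^{\sigma_i}$ has parallel weight $2$, the eight archimedean local root numbers are each $+1$, so the global sign of each factor is determined by its finite components; the ramification data of $B$ (dictated by Step~1) must then force $\varepsilon\bigl(1/2,\pi_1^{\sigma_1}\otimes \pi_2^{\sigma_2}\otimes \pi_3^{\sigma_3}\bigr)=-1$ for every triple. This is the precise condition under which the Yuan--Zhang--Zhang formula applies and makes each factor vanish at $s=2$ to odd order $\geq 1$.

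Let $w$ denote the hyperelliptic involution of $X$. Since $w^*$ acts as $-1$ on $\Pic^0(X)=\Jac(X)$, it acts as $-1$ on each $A_i$, hence $w\cdot \pi_i^{\sigma_i}=-\pi_i^{\sigma_i}$ for every $\sigma_i$. Consequently the projector onto the $(\pi_1^{\sigma_1}\otimes \pi_2^{\sigma_2}\otimes \pi_3^{\sigma_3})$-isotypic component of $\rH^3(X^3)$, when composed with the diagonal involution $(w,w,w)^*$, yields $(-1)^3=-1$ times itself. The Yuan--Zhang--Zhang formula identifies $L'\bigl(2,\pi_1^{\sigma_1}\otimes \pi_2^{\sigma_2}\otimes \pi_3^{\sigma_3}\bigr)$, up to an explicit nonzero factor, with the Beilinson--Bloch height of the $(\pi_1^{\sigma_1}\otimes \pi_2^{\sigma_2}\otimes \pi_3^{\sigma_3})$-isotypic projection of the Gross--Schoen cycle $\Delta_{\mathrm{GS}}\in \mathrm{CH}^2(X^3)_0$. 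Choosing the base point defining $\Delta_{\mathrm{GS}}$ from a $w$-stable divisor class (e.g.\ associated with the Weierstrass locus of $X$), one has $(w,w,w)^*\Delta_{\mathrm{GS}}=\Delta_{\mathrm{GS}}$; but pairing a $(w,w,w)^*$-invariant cycle class with a $(-1)$-eigenspace projection yields zero, so the isotypic projection, and hence the central derivative, vanishes for every triple.

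Combining these two inputs, every one of the $4^3$ factors vanishes at $s=2$ with vanishing first derivative and odd order of vanishing, hence to order at least $3$; taking the product yields $\ord_{s=2}L\geq 3\cdot 4^3$. The whole argument reduces to (i) the construction and Jacobian decomposition of the hyperelliptic Shimura curve $X$, and (ii) the verification of the global root numbers; once these are in place, the vanishing of the Gross--Schoen projection via the hyperelliptic involution is formal, and this structural simplification is precisely the point of working with hyperelliptic Shimura curves. The construction in (i) is clearly the main obstacle: one must exhibit, over a precisely specified real cyclotomic field, a hyperelliptic quaternionic Shimura curve whose Jacobian admits three pairwise non-isogenous four-dimensional $\GL_2$-type simple factors of the prescribed conductor $\fp$.
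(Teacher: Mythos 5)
Your overall strategy is the same as the paper's: realize the three abelian varieties inside the Jacobian of a single hyperelliptic Shimura curve, apply the Yuan--Zhang--Zhang arithmetic triple product formula, kill the isotypic Gross--Schoen cycle by the hyperelliptic symmetry, and combine with a global root number $-1$. But two steps you defer or merely assert are genuine inputs that the proof cannot do without. First, the existence of the curve: the paper does not construct it but invokes Demb\'el\'e's theorem that the Atkin--Lehner quotient $X_0^D(1)/\langle w_D\rangle$ over $F$, for $D$ the quaternion algebra ramified exactly at $\fp$ and at seven of the eight real places, is hyperelliptic of genus $16$, with Jacobian accounted for precisely by the four Galois orbits (each of size $4$) of Hilbert newforms of level $\Gamma_0(\fp)$ and parallel weight $2$ with Atkin--Lehner sign $-1$; these give exactly the $4$-dimensional simple $\GL_2$-type factors of conductor $\fp$. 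You correctly flag this as the main obstacle, but leaving it unresolved leaves the proof without its foundation.

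Second, and more subtly, your phrase ``up to an explicit nonzero factor'' in the Yuan--Zhang--Zhang formula hides the essential local input. That factor is a product of local trilinear forms evaluated on the distinguished vectors $f_i\in\pi_{i}^{U}$, and its nonvanishing is exactly the Gross--Prasad test-vector theorem (trivial at $\fp$, where each $\pi_{i,\fp}$ is a character of $D_\fp^\times$, and the unramified computation at the other finite places). This is precisely where it matters that Demb\'el\'e's hyperelliptic involution is \emph{exceptional}, i.e.\ not an Atkin--Lehner involution: if the hyperelliptic involution were some $w_v$ in the Atkin--Lehner group, your $(-1)^3$ symmetry argument would still kill the cycle, but the same computation would force the local trilinear form to vanish on the test vectors, the constant would be zero, and no conclusion about $L'$ could be drawn (this is the content of the paper's final remark in Section 5). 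Your argument as written does not distinguish the two cases and would ``prove'' vanishing of $L'$ in situations where it cannot be deduced. Finally, the archimedean local root numbers of a triple of weight-$2$ forms are $-1$, not $+1$ (Prasad); your conclusion survives only because $F$ has an even number (eight) of real places, and the claim that every one of the $4^3$ factors has global sign $-1$ still has to be proved via Prasad's dichotomy and the nonvanishing of the local trilinear forms at the finite places, which you assert (``must then force'') rather than verify.
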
  

Indeed, 
there are exactly four  modular abelian varieties of $\GL_2$-type over $F$  of  conductor $\fp$ and dimension 4, which are all simple.
They exactly 
correspond to the only 4 Galois orbits of Hilbert newforms over $F$ with Atkin--Lehner sign $-1$.  And each Galois orbit  is of size 4.
Then the fact that the triple product $L$-function     has a holomorphic continuation to $s\in\BC$   follows from \cite{Gar,PSR}.   

We actually prove a slightly more general version of   \Cref{thmintro} in terms of 
Hilbert newforms over $F$.
   In \Cref{exmain}, we show that 
for any 3  pairwise distinct  newforms of the 16  Hilbert newforms (in the four Galois orbits of size 4) above, their triple product $L$-function has
central vanishing order   at least 3.

In general, when  some of the 3 newforms are the same, the triple product $L$-function is decomposable. See \Cref{sec:6} . In this case, using usual modular forms over $\BQ$,
we  get many degree 6  $L$-functions with central vanishing order   at least 2 (\Cref{thm:ord2}).

Our proof of \Cref{thmintro} 
 is similar to the one of Gross and Zagier in spirit.  They used their celebrated formula on modular curves,
 which  relates  $L$-functions of modular forms to the heights of (Heegner) CM 0-cycles.
 We use an analog proved by   Yuan, S. Zhang, and W. Zhang \cite{YZZ0} for some Shimura curves, which  relates  triples product $L$-functions to the heights of modified diagonal cycles on the triple product of Shimura curves. Both of us show that our cycles accidentally  vanish for some reasons. 
 
 In our case,   our cycles vanish due to an exceptional hyperelliptic involution,  which is the work of Demb\'el\'e \cite{Dem}. See \Cref{Demcurve}.
 Here, an involution  is  exceptional if it does  not come from an Atkin--Lehner involution. (In general, if an Atkin--Lehner involution is a hyperelliptic involution, it is not useful to produce higher central vanishing of  $L$-functions. See \Cref{final remark}.)
  Interestingly, the specific modular curve used for Gross and Zagier's order 3 example, which is $X_0(37)$,  is of genus 2. And its hyperelliptic involution is  exceptional.  
It might be a coincidence, since the hyperellipticity   is essential for our result, but it is not  used in  \cite{GZ,Zag}. 

 Finally, we want to remark that  there are only finitely many  hyperelliptic Shimura curve, as shown in \cite{Qiufin}, let along the ones satisfying our specific criterions \Cref{sec5}. 
 Moreover, we are mostly interested in the ones of genus at least 3 due to the decomposability of  
  triple product $L$-functions mentioned above.   They are even  rarer. For example, there are no such  modular curves as we will see in \Cref{sec:genus2}.
   Such rareness  is also a 
  facet of the expectation
that $L$-functions  of central vanishing order at least 3 are very rare.

   \subsection*{Structure of the paper}

In \Cref{sec:1} and \Cref{sec:2}, we review some background on abelian varieties and Shimura curves, respectively. 
In \Cref{sec4}, we relate modified diagonals to the hyperellipticity of Shimura curves. 
In \Cref{sec5}, we give criteria for triple product $L$-functions to have central vanishing order at least 3. 
In \Cref{sec:6}, we consider the cases where the triple product $L$-function is decomposable. 
In \Cref{sec:genus2}, we study examples of hyperelliptic modular curves and obtain many degree-6 $L$-functions over $\BQ$ with central vanishing order at least 2. 
Finally, in \Cref{Demcurve}, we study the hyperelliptic Shimura curve of Demb\'el\'e and prove \Cref{thmintro}.

  \subsection*{Acknowledgments}

The author would like to thank Peter~Sarnak for raising the question of possible analogs of Gross--Zagier's example in higher dimensions during a talk by the author at Princeton.
The author is  grateful to  Yifeng~Liu and Wei~Zhang for many helpful discussions   in the early stages of this work.
 The author also thanks Benedict~Gross and Adam~Logan for valuable communications that helped correct a mistake in the first version of this paper.
  
  \section{Triple product of abelian motives}  \label{sec:1}

 For a smooth projective  variety $V$ over a  field,   let $\Ch_i(V)$ be the Chow group  of $V$   of   dimension $i$ cycles with   $\BQ$-coefficients, modulo rational equivalence.  For a morphism $f:V\to U$,  let  $ \Gamma_f\in \Ch_*(V\times U)$ be  the graph of $f$.

   Let $A$ be an abelian variety  over a  field $F$ of dimension $g$, with dual abelian variety $A^\vee$. Let $\pr \in \Ch_g(A\times A)$  be the $(2g-1)$-th Deninger--Murre projector, that is denoted by $\pi_{2g-1}$ in  \cite[Theorem 3.1]{DM}.
  For a variety $X$ over $F$, 
the abelian group structure of $A$ induces an abelian  group structure on $  \Mor(X,A) $, the set of morphisms of $F$-varieties. 
Then by regarding $X\times A$ as an abelian scheme over $X$, \cite[Corollary A.3.2]{QZ2}  implies  that the following map  is a homomorphism of abelian groups: 
\begin{align*}
\Mor (X,A)& \to  \Ch_{\dim X}(X\times A),\\
 f&\mapsto {\pr}\circ [\Gamma_f].
 \end{align*}  
  Denote  the unique  $\BC$-linear extension $ \Mor (X,A)_\BC \to  \Ch_{\dim X}(X\times A)$ by $$f\mapsto {\pr_f}.$$

 For  an idempotent $\iota \in \End(A)_\BC$, by \cite[Proposition 3.3]{DM},
 $\pr_\iota$  is a projector.
  Moreover, if $\iota$ is in the center of $\End(A)_\BC$, one can easily check that its image  $\iota^\dag$
 under a  Rosati involution  
  on $\End(A)_\BC$ associated  with a polarization does not depend on the choice of the polarization.

 \begin{prop} \label{prop:van0}  
 Let the symmetric group $S_n$ act on $A^n$ by permuting the components. 
   Assume that 
  $ \dim \iota _*H^1(A)<n$.
For $z \in \Ch_*(A^n)$ invariant by $S_n$, 
we have  $ (\pr_\iota^{ n})_* z  =0  $.  
     
 \end{prop}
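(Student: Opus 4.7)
The plan is to identify $(\pr_\iota^n)_{*} z$ as an element of the Chow realization of a motive that vanishes by Kimura finite-dimensionality. Concretely, since $\pr = \pi_{2g-1}$ is the Deninger--Murre projector onto $h^{2g-1}(A) \cong \bigwedge^{2g-1} h^1(A)$, the cycle $\pr_\iota = \pr \circ [\Gamma_\iota]$ realizes the idempotent $\iota|_{h^{2g-1}(A)}$ on $h(A)$, with image the sub-motive $M := \iota_{*} h^{2g-1}(A) \cong \bigwedge^{2g-1}(\iota_{*} h^1(A))$ of super-dimension $\binom{d}{2g-1}$, where $d := \dim \iota_{*} H^1(A)$. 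The external product $\pr_\iota^{\boxtimes n}$ then projects $h(A^n) \cong h(A)^{\otimes n}$ onto $M^{\otimes n}$.

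\medskip

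\noindent Next, a short correspondence calculation---using the external-product structure of $\pr_\iota^{\boxtimes n}$ together with the identity $\Gamma \circ [\Gamma_\sigma] = (\sigma \times \id)^{*} \Gamma$ for $\sigma \in S_n$---shows that $\sigma_{*}$ and $(\pr_\iota^{\boxtimes n})_{*}$ commute on $\Ch(A^n)$. Consequently, for any $S_n$-invariant $z$, the class $(\pr_\iota^n)_{*} z$ lies in the Chow realization of the Kimura symmetric power $\Sym^n M$, which (since $M$ sits in odd super-degree) corresponds cohomologically to the ordinary exterior power $\bigwedge^n(\iota_{*} H^{2g-1}(A))$.

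\medskip

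\noindent To finish, I would invoke the Shermenev--K\"unnemann theorem that $h^1(A)$ is oddly Kimura finite-dimensional of dimension $2g$. Sub-motives and exterior powers inherit this property, so $M$ is oddly Kimura finite-dimensional of dimension $\binom{d}{2g-1} \leq d < n$. Kimura's vanishing criterion then gives $\Sym^n M = 0$ in the category of Chow motives, whence $(\pr_\iota^n)_{*} z = 0$.

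\medskip

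\noindent The hardest step will be rigorously matching the geometric pushforward $S_n$-action on $\Ch(A^n)$ with the categorical $S_n$-action on the motivic tensor power $M^{\otimes n}$, tracking the Koszul signs needed to identify the geometric invariants with the Kimura $\Sym^n M$ rather than a naively symmetric power. Once this compatibility is established, the Kimura--Shermenev vanishing cleanly closes the argument.
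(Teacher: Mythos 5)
Your overall strategy is the same one the paper relies on: the paper's proof is a one-line reference to \cite[Corollary 2.2.3]{QiuFP}, which is exactly the Kimura finite-dimensionality argument you spell out (an $S_n$-invariant class lands in $\Sym^n$ of an oddly finite-dimensional motive of dimension $<n$, which vanishes by Kimura's theorem). The commutation of $\sigma_*$ with $(\pr_\iota^{\boxtimes n})_*$ and the Koszul-sign bookkeeping identifying geometric $S_n$-invariants with the Kimura symmetric power of an odd motive are standard and fine.

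There is, however, one genuine misstep: the identification $M:=\iota_*h^{2g-1}(A)\cong\bigwedge^{2g-1}\bigl(\iota_*h^1(A)\bigr)$ of super-dimension $\binom{d}{2g-1}$ is false. The exterior-power description applies to the \emph{pullback} action $f^*=\bigwedge^{2g-1}(f^*|_{h^1})$ on $h^{2g-1}(A)$, which is multiplicative but not additive in $f$; the correspondence $\pr_f=\pi_{2g-1}\circ[\Gamma_f]$ is additive in $f$ precisely because it acts on $h^{2g-1}(A)\cong h_1(A)$ through the Poincar\'e dual of $f^*|_{h^1}$. Hence $(A,\pr_\iota)$ realizes to a $d$-dimensional space concentrated in degree $2g-1$, where $d=\dim\iota_*H^1(A)$, not a $\binom{d}{2g-1}$-dimensional one. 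Your formula would force $M=0$ whenever $d<2g-1$ --- in the paper's main application $g=4$ and $d=2$, so it would contradict \Cref{prop:van1}, which asserts that $(\pr_\iota^2)_*\Ch_*(A^2)$ is nonzero. The error happens to be harmless for the present proposition, since all you use is $\dim M<n$ and the correct value $d$ satisfies this by hypothesis; but you should replace the exterior-power identification by the duality statement above before invoking Kimura's vanishing for $\Sym^nM$ with $\dim M=d<n$.
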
   
   \begin{proof}
  The proof is the same as \cite[Corollary 2.2.3]{QiuFP}.
\end{proof}

 \begin{prop} \label{prop:van1}   
    Let $f:X\to A$ be a morphism from a smooth projective curve over $F$ whose associated Albanese morphism to $A$ is surjective.
    Let $\Delta\subset X^2$ be the diagonal.
    Assume  that $\iota$ is in the center of $\End(A)_\BC$ such that
     $\iota^\dag =\iota$
and
$ \dim  \iota _*H^1(A)\leq 2$.
    Then $ (\pr_\iota^{ 2})_*\Ch_*(A^2) =\BC\cdot (\pr_\iota^{ 2})_*(f^2)_*[\Delta]$, which is nonzero.

 \end{prop}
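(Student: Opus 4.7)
My plan is to combine a Beauville-type reduction to cohomology with a symmetry argument exploiting $\iota^\dag = \iota$, followed by a non-vanishing check via the Albanese. Since $\iota$ is a central idempotent, $\pr_{\iota,*}$ acts on $H^*(A)$ as the projection onto $W := \iota_* H^1(A)$, nonzero only in degree $1$, so $\pr_\iota^2 = \pr_\iota \boxtimes \pr_\iota$ projects $H^*(A^2)$ onto $W \otimes W \subset H^1(A) \otimes H^1(A) \subset H^2(A^2)$, of dimension at most $4$. By Beauville's decomposition of $\Ch_*(A^2)_\BC$ into multiplication-by-$n$ eigenspaces, $(\pr_\iota^2)_*\Ch_*(A^2)$ lies in a single Beauville eigenpiece on which the cycle class map to cohomology is injective (Beauville's theorem, applied over our number field), reducing the Chow bound to a cohomological one.

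To reduce the dimension from $4$ to $1$, I would use a swap-symmetry argument. The self-adjointness $\iota^\dag = \iota$ combined with centrality makes $\pr_\iota^2$ equivariant under the swap $\tau$ of the two $A$-factors, so the image is $\tau$-stable. Under the Koszul sign convention, the $\tau$-invariant subspace of $H^1(A) \otimes H^1(A)$ equals the wedge $\wedge^2 H^1(A)$, and its intersection with $W \otimes W$ is $\wedge^2 W$, of dimension $\le \binom{\dim W}{2} \le 1$. To confine the Chow image to this $\tau$-invariant subspace, I would adapt the argument of Proposition~\ref{prop:van0} to the boundary case $n = 2$, $\dim W = 2$: the $\tau$-antisymmetric part of $(\pr_\iota^2)_* z$ vanishes for any $z \in \Ch_*(A^2)$, by essentially the same dimension count as in Proposition~\ref{prop:van0} applied to the antisymmetric component, using the self-adjointness of $\iota$.

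For the non-vanishing, the Albanese surjectivity dualizes to an inclusion of Hodge structures $H^1(A) \hookrightarrow H^1(X)$, embedding $W$ as a symplectic sub-Hodge structure of $H^1(X)$. The K\"unneth $(1,1)$-component of $[\Delta] \in H^2(X \times X)$ is the Poincar\'e duality form on $H^1(X)$, so $(\pr_\iota^2)_*(f^2)_*[\Delta]$ has cohomology class equal to the restriction of this form to $W$, a non-degenerate symplectic pairing; for $g = 1$ this directly yields a non-zero element of $\wedge^2 W$, and for $g > 1$ the cycle class necessarily vanishes, but the Chow class remains non-zero and can be detected via the Abel--Jacobi map on the relevant Beauville piece of $\Ch_1(A^2)$. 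The principal obstacle I anticipate is the vanishing of the $\tau$-antisymmetric Chow image (requiring an extension of Proposition~\ref{prop:van0} to the boundary case) together with the Chow-level non-vanishing at higher genus via motivic arguments beyond cohomology.
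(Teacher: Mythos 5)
Your reduction to cohomology is where the argument breaks down, and it is precisely the content of the proposition. Because $\pr_\iota$ is built from the $(2g-1)$-th Deninger--Murre projector, the image $(\pr_\iota^{2})_*\Ch_*(A^2)$ is not confined to one Beauville eigenpiece: it meets $\Ch_d(A^2)$ for $d=0,1,2$, with Beauville indices $s=2,0,-2$ respectively. Injectivity of the cycle class map on these pieces is not ``Beauville's theorem''; for $s=0$ in this codimension it can be salvaged by the Fourier transform, but on the $s=2$ piece of zero-cycles the cycle class map is identically zero on a group that is not a priori even finite-dimensional, so nothing is ``reduced to a cohomological bound'' there. Your symmetry step does not repair this: the mechanism of Proposition~\ref{prop:van0} is Kimura's vanishing of $\Sym^{n}$ of the motive $(A,\pr_\iota)$ when $\dim W<n$, and in the boundary case $n=2$, $\dim W=2$ this only gives $\Sym^3=0$; the summand $\wedge^2(A,\pr_\iota)$ of the square has three-dimensional realization $\Sym^2 W$ and is certainly a nonzero motive, so no dimension count of the kind you invoke can kill its Chow groups. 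Showing that this transcendental-looking summand contributes nothing to the Chow group is the genuinely hard half of the statement; the paper handles it by citing \cite[Corollary 2.2.7(1)]{QiuFP}, whose proof is motivic (Kimura finite-dimensionality, lifting of isomorphisms from numerical motives, e.g.\ $\Sym^2$ of the rank-two self-dual odd motive being a Lefschetz motive), not cohomological.

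Your non-vanishing argument errs in the opposite direction. The cycle $(f^2)_*[\Delta]$ is a one-cycle on the $2g$-dimensional variety $A^2$, so its class lives in $H^{4g-2}(A^2)$, and $(\pr_\iota^{2})_*$ projects onto the K\"unneth component $H^{2g-1}(A)\otimes H^{2g-1}(A)$ there --- not onto $W\otimes W\subset H^2(A^2)$, which is why you wrongly concluded that the class dies for $g>1$. Pairing against $\alpha\otimes\beta$ with $\alpha,\beta\in H^1(A)$ gives $\int_X f^*\alpha\cup f^*\beta$, the cup-product form on $f^*H^1(A)$, whose restriction to the $\iota$-isotypic part is nondegenerate by surjectivity of the Albanese map and $\iota^\dag=\iota$. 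So the cycle class is nonzero for every $g$, the non-vanishing is the easy purely cohomological half, and your proposed detour through Abel--Jacobi maps and ``motivic arguments beyond cohomology'' is both unnecessary and would itself rest on unproved conjectures.
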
 
 \begin{proof}
   The proof   is the same as \cite[Corollary 2.2.7 (1)]{QiuFP}.
\end{proof}  
\begin{rmk}
The relation between the above two propositions and \cite[Corollary 2.2.3 , Corollary 2.2.7 (1)]{QiuFP} is not just similar. Indeed, if $A$ is simple,  
 we have an isomoprhism between the motives $(A,\pr_\iota)$ and 
 $(X,\delta_f)$ defined in \cite[Section 2.4]{QiuFP}\footnote{In loc. cit., we used $\phi$ instead of $f$ to denote a morphism from $
X$ to $A$. }. This isomoprhism can be easily proved using the decomposition of $\delta_f$ 
in  (the summand on the right hand side of)  \cite[(2.13)]{QZ2} and the nilpotency  of Kimura \cite[Proposition 7.5]{Kim}. 
\end{rmk}

   \section{Shimura curves}\label{sec:2}
 
 Let $F$ be a totally real number field and $\BA_\rf$  its ring of finite adeles.
  Let $B$  be a   quaternion algebra   split   at one infinite  place $\tau:F\incl \BR$ of $F$ and division at all other infinite places of $F$.
 Let $B^+\subset B^\times$ be the subgroup of elements with totally positive norms, equivalently, with positive determinants  in $B_\tau^\times\cong \GL_2(\BR)$.  
 Let  $\BH$
 be the complex upper half-plane, and  let $B^+\subset\GL_2(\BR)$  act on $\BH$ by the fractional linear transformation. 

  For an open compact-modulo-center subgroup $U$ of $B^\times(\BA_\rf)$, we have the smooth   compactified   Shimura curve $ X_U$ for  $B^\times$ of level $U$ over $F$ \cite[3.1.4]{YZZ}\footnote{Technically, sometimes by   ``Shimura curve $ X_U$ for  $B^\times$", it is required that $U$ is compact. And if $U$ contains the center of $B^\times(\BA_\rf)$, $X_U$ should be refered to a Shimura curve for $PB^\times$. We do not make this distinction here.}. 
  The  complex uniformization  of  $X_U$ via the distinguished infinite place $\tau: F\incl \BR\subset \BC$ is given  by
\begin{equation}X_{U,\tau,\BC} \cong B^+ \bsl \BH \times B^\times(\BA_\rf)/U\coprod \{\text{cusps}\}.\label{Xcomplex}\end{equation}
Here,
 the cusps exist if and only if $X_U $ is a modular curve, i.e. $F=\BQ$, and $B $ is the matrix algebra.  

 Consider $\vpl _U X_U$ where  the transition morphisms are   the natural morphisms of Shimura curves. 
 There is a natural   right action of $B^\times(\BA_\rf)$  on $\vpl _U X_U$. At the level of the  complex points, the action is by right multiplication via  \eqref{Xcomplex}.

For $x \in X_U(\BC)$ that is the image of a point in $\BH \times G(\BA_\rf)/U$ 
with a nontrivial stabilizer in $B^+$,  we call $x$ an elliptic point and let $r_x$ be  the order of the stabilizer. There are only finitely many elliptic points on $X_U$.
Let $$\fe_U = \sum_{\text{elliptic } x } \lb 1-\frac{1}{r_x}\rb x $$
Let $K_U$ be a canonical divisor of $X_U$, and 
$\cK_U = K_U+\fe_U$
an orbifold canonical divisor of $X_U$.
Let $\fc_U$ be the divisor of the cusps of $X_U$, which is 0 unless $X_U$ is a modular curve.
Let
$$L_U = \cK_U  +\fc_U.$$ 
 Then $\deg L_U>0$
 and $[L_U]$
 is compatible under pullback by  the natural morphisms of Shimura curves as the level  $U$ changes. 
 And thus defines a divisor class
  on  $\vpl _U X_U$. This divisor class is invarant  by the  right action of $B^\times(\BA_\rf)$.  
 Let $$\lambda_U = [L_U]/\deg L_U^\circ$$ where   $L_U^\circ$ is the restriction of $L_U$ to 
 any geometrically connected component of $X_U$.
 In \cite[Remark 3.2.4]{QZ2}, we further noticed that if $X_U$ is a modular curve, then  
$\fc_U$ is a multiple of $\lambda_U$. So  
\begin{equation}\label{lambda}
  \lambda_U = [\cK_U]/\deg \cK_U^\circ,
\end{equation}
provided that $\deg \cK_U>0$.

 For an abelian variety  $A$   over $F$, let  $\Mor^0(X_U,A)$ be the subspace of  $f\in \Hom(X_U,A)_\BQ $  such that
for some (equivalently for every) representative $\sum_i a_i p_i$ of $\lambda_{U,\ol F}$, where $a_i\in \BQ$ and $p_i\in X_U( \ol F)$, the sum of  $ f(p_i)\otimes a_i $ over $i$'s in the group $A(\ol k)\otimes {\BQ}$ is the identity of $A$.
  Let 
 $$
\pi_A :=\vil_U\Mor^0(X_U,A),
 $$
where  the transition morphisms are  induced by the natural morphisms of Shimura curves.
  The $\BQ$-vector space $\pi_A$ carries a natural left action by $B^\times(\BA_\rf)$, induced by the right action of $B^\times(\BA_\rf)$ on $\vpl X_U$.
Then  the subspace of $U$-invariants is $$\pi_A^U=\Mor^0(X_U,A).$$
 Let $\End(A)_\BQ$   act on $\pi_A^U$ and $\pi_A$ from left by post-composition, and one may regard $\pi_A$ as a $B^\times(\BA_\rf)$-representation with $\End(A)_\BQ$-coefficients.

  \begin{thm}[{\cite[3.2.2, 3.2.3]{YZZ}}]\label{YZZ322} 
  Let $A$ be a simple abelian variety over $F$ such that 
 $
 \pi_A\neq 0$. 
 
(1) The $\BQ$-algebra $\End(A)_\BQ$ is a finite field extension of $\BQ$ and $[\End(A)_\BQ:\BQ]=\dim A$. 

(2)   Consider the set of pairs 
$ (A,\iota)$ where $A$ is a simple abelian variety over $F$ such that 
 $
 \pi_A\neq 0$ and  $\iota: \End(A)_\BQ\incl \BC$ is an embedding. The map $(A,\iota)\to \pi_{A,\iota}$
 is a bijection to  the set of finite  components of 
   automorphic representations of $B^\times$, whose Jacquet--Langlands correspondence to $\GL_{2,F}$ is cuspidal and holomorphic of weight 2.

 \end{thm}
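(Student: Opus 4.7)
The plan is to reduce the statement to the Eichler--Shimura decomposition of $\Jac(X_U)$ combined with the Jacquet--Langlands correspondence. First I would observe that the condition defining $\Mor^0(X_U, A)$ -- that a representative of $\lambda_U$ be sent to the identity of $A(\ov F) \otimes \BQ$ -- is precisely what allows one to extend any $f \in \Mor^0(X_U, A)$ to a homomorphism $\Jac(X_U) \to A$, yielding a functorial identification
$$
\Mor^0(X_U, A) \cong \Hom(\Jac(X_U), A)_\BQ
$$
of $B^\times(\BA_\rf) \times \End(A)_\BQ$-bimodules. In the colimit, $\pi_A \cong \vil_U \Hom(\Jac(X_U), A)_\BQ$, so $\pi_A$ carries commuting actions of $B^\times(\BA_\rf)$ (from the right $B^\times(\BA_\rf)$-action on the tower) and of $\End(A)_\BQ$ (from post-composition).

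Next I would invoke the fundamental decomposition of the Tate module: passing to the limit over $U$ and tensoring with $\ov{\BQ}_\ell$ gives a $\Gal(\ov F/F) \times B^\times(\BA_\rf)$-equivariant isomorphism
$$
\vil_U V_\ell \Jac(X_U) \otimes_{\BQ_\ell} \ov{\BQ}_\ell \;\cong\; \bigoplus_\pi \pi_f \otimes \rho_\pi,
$$
where $\pi$ runs over the irreducible automorphic representations of $B^\times$ whose Jacquet--Langlands transfer to $\GL_{2,F}$ is cuspidal holomorphic of weight $2$ and $\rho_\pi$ is the attached $2$-dimensional Galois representation. By Faltings' isogeny theorem and the hypothesis $\pi_A \neq 0$, $V_\ell A$ is a direct summand of the right-hand side; simplicity of $A$ then forces it to lie in the block corresponding to a single $\Gal(\ov{\BQ}/\BQ)$-orbit $\{\pi^\sigma\}$ with common Hecke field $E := \BQ(\pi)$. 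The commutant of the Galois action on this isotypic block is exactly $E$, identifying $\End(A)_\BQ \cong E$ as a number field; comparing dimensions ($2\dim A = 2[E:\BQ]$) then yields part (1).

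For part (2), an embedding $\iota : E \hookrightarrow \BC$ selects a single representative from the Galois orbit, and the decomposition above immediately gives $\pi_{A,\iota} = \pi_A \otimes_{E,\iota} \BC \cong \pi_f$ as $B^\times(\BA_\rf)$-representations. Conversely, given $\pi$ of the specified type, one takes $U$ small enough that $\pi^U \neq 0$ and lets $A$ be the $E$-simple isogeny factor of $\Jac(X_U)$ cut out by the Hecke eigensystem of $\pi$; verifying that these two assignments are mutually inverse is routine once the decomposition is in hand. The hard part, in my view, is establishing that isotypic decomposition in the general (non-split, totally real) setting, where one must combine Matsushima's formula, strong multiplicity one for $\GL_{2,F}$, and the full Jacquet--Langlands correspondence. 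With these tools available, both (1) and (2) reduce to bookkeeping with commutants and Galois orbits.
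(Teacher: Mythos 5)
The paper offers no proof of this statement: it is imported verbatim from \cite[3.2.2, 3.2.3]{YZZ}, so there is nothing internal to compare against. Your sketch (identifying $\Mor^0(X_U,A)$ with $\Hom(\Jac(X_U),A)_\BQ$, decomposing the tower of Jacobians into automorphic isotypic blocks via Eichler--Shimura and Jacquet--Langlands, and extracting $\End(A)_\BQ$ as the Hecke field by Faltings plus a commutant computation) is exactly the standard route taken in that reference, and the steps you flag as "the hard part" are precisely the content of loc.\ cit.
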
 

 \begin{rmk}\label{rmkMA}
 (1)
By (1) of the theorem, $A$ is of $\GL_2$-type.  A simple abelian variety over $F$ of $\GL_2$-type with 
 $\pi_A\neq 0$ for some $B$ as above is called  modular (or automorphic in \cite{YZZ}).

 (2) 
 Since $\End(A)_\BQ $  is a field so that 
 irreducible idempontents of $\End(A)_\BQ $ are in bijection with  embeddings 
 $  M_A\incl \BC$.  Moreover, they are automatically central. Thus, here we use the same notation $\iota$ as in \Cref{prop:van0}   and 
 \Cref{prop:van1}.
 \end{rmk}

Below we simplify notations and let $\lambda, \fc, \fe$ denote
 $\lambda_U, \fc_U, \fe_U$ if $U$ is clear from the context.
 Assume $\deg(K_U)>0$. 
 Let  $$\xi = [K_U]/\deg K_U^\circ.$$
By definition, 
      $\pr_{f,*}\lambda = 0$. Then by \eqref{lambda},
we have  
\begin{equation}\label{xie2}
  \pr_{f,*}\xi = 0\text{ if and only if }\pr_{f,*}[\fe] = 0.
\end{equation} 

 \section{Diagonals and hyperellipticity}\label{sec4}

Consider  the $n$-th power  of  the Shimura curve $ X_U$.
 Let  $$\Ch(n)=\vpl_U  \Ch_*(X_{U}^n) ,$$
  where  the transition maps in the inverse limit are pushforwrds   by the natural morphisms between Shimura curves. 
Let $ B^{\times,n}(\BA_\rf)$ act on $\ol\Ch(n)$ (from left) by   pullback.
Let  $\pi_i$  be of the form $\pi_{A_i,\iota_i} $ for some pair $(A_i,\iota_i)$ as in \Cref{YZZ322} (2), which is a representation of  $ B^{\times}(\BA_\rf)$.  
   We have a linear map
   $   \pi_{i}^U\to
       \Ch_{1}(X_U\times A_i)$, sending $f_i$ to  $ \pr_{f_i}$. Let
       \begin{align*}
\boxtimes_{i=1}^n\pi_{i}^U& \to  \otimes_{i=1}^n  \Ch_{1}(X_U\times A_i),\\
 f&\mapsto \pr_f.
 \end{align*}  
        be the tensor product of these maps.

The following is a slight generalization of \cite[Theorem 3.3.2]{QZ2}. The proof is the same.
 \begin{thm}\label{general}

 Let $ H\subset  B^{\times,n}(\BA_\rf)$ be a subgroup. Let $\Pi\subset \boxtimes_{i=1}^n\pi_{i}$ be a subspace be stable by $H$. If 
 $\Pi$  has no nonzero  $H$-invariant linear forms, then for $z=(z_U)_U\in\Ch(n) $ invariant by $H$, we have $$ \pr_{f,*} z = 0 $$ for  all $f\in \Pi $.
\end{thm}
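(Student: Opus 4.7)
The plan is to view $f \mapsto \pr_{f,*}z$ as a linear map $\Phi \colon \Pi \to \Ch_*(A_1\times\cdots\times A_n)_{\BC}$, show that it is $H$-equivariant with the target carrying the trivial $H$-action, and then apply the hypothesis on $H$-invariant linear forms. First I would check that $\Phi$ is well defined: a simple tensor $f = f_1\otimes\cdots\otimes f_n\in\boxtimes_i \pi_i^U$ gives $\pr_f = \pr_{f_1}\boxtimes\cdots\boxtimes\pr_{f_n}$, which I view as a correspondence $X_U^n \to A_1\times\cdots\times A_n$ acting on $z_U\in \Ch_*(X_U^n)$; independence from the choice of level $U$ follows from the projection formula and the fact that $(z_U)_U$ is compatible under pushforward along the natural morphisms between Shimura curves.

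The main step is the equivariance $\pr_{h\cdot f, *}z = \pr_{f,*}z$ for $h\in H$ and $f\in\Pi$. I pick a level $U$ at which both $f$ and $h\cdot f$ are represented (shrinking $U$ if necessary). Since the $B^{\times,n}(\BA_{\rf})$-action on $\boxtimes_i\pi_i$ is induced from the right action on $\vpl_U X_U^n$, the cycle $\pr_{h\cdot f}$ is obtained from $\pr_f$ by pulling back along the action of $h$ on the $X_U^n$-factor while fixing the $\prod_i A_i$-factor. Combining the projection formula for the finite \'etale maps between Shimura curve levels with $H$-invariance of $z$ in $\Ch(n) = \vpl_U \Ch_*(X_U^n)$, which is precisely the statement $h^*z = z$ compatibly across levels, yields $\pr_{h\cdot f, *}z = \pr_{f,*}(h^*z) = \pr_{f,*}z$. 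This is the same computation as in \cite[Theorem 3.3.2]{QZ2}; the adaptation to general $n$, to general $\pi_i$, and to an arbitrary $H$-stable subspace $\Pi$ (rather than the full $\boxtimes_i \pi_i$) requires no essential modification.

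With equivariance in hand, composing $\Phi$ with any $\BC$-linear functional on the target gives an $H$-invariant linear form on $\Pi$, which must vanish by hypothesis. Since every nonzero vector in a $\BC$-vector space is separated from zero by some linear functional, this forces $\Phi = 0$, i.e., $\pr_{f,*}z = 0$ for every $f\in\Pi$. The main obstacle I anticipate is the bookkeeping in the equivariance step: one must carefully track how the cycles $\pr_f$ and the system $z=(z_U)_U$ transform under the right action of $H$ across different Shimura curve levels, and verify that the pushforward-pullback identities line up with the inverse-system definition of $H$-invariance for $z$.
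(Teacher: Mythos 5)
Your argument is correct and is essentially the paper's own proof: the paper simply asserts that the proof of \cite[Theorem 3.3.2]{QZ2} carries over verbatim, and that proof is exactly the equivariance-plus-separating-functionals argument you describe (the map $f\mapsto \pr_{f,*}z$ is $H$-invariant because $\pr_{h\cdot f,*}z=\pr_{f,*}(h^*z)=\pr_{f,*}z$, so any functional composed with it is an $H$-invariant linear form on $\Pi$, hence zero). No gaps.
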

In particular, the theorem applies to a sub-$B^{\times,n}(\BA_\rf)$-representation $\Pi$ of $\boxtimes_{i=1}^n\pi_{i}$.

Let $\Delta_n = \Delta_{U,n}$ be the diagonal $X_U^n$.
      Let 
  $$[\Delta_n]_{f} =  \pr_{f,*}[\Delta_n] $$  in $ \Ch_1(A_1\times A_2\times...  \times A_n) .$

Accordingly, we have the following   slight generalization of  \cite[Corollary 3.3.4]{QZ2}.
 
 \begin{cor}\label{generalcor}

If a sub-$B^{\times,n}(\BA_\rf)$-representation $\Pi$ of $\boxtimes_{i=1}^n\pi_{i}$.
   has no nonzero diagonal-$B^\times(\BA_\rf)$-invariant linear forms, then   $$[\Delta_n]_{f}= 0 $$ for all  $f\in \Pi$.
\end{cor}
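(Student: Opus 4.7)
The plan is to deduce \Cref{generalcor} directly from \Cref{general} by specializing the subgroup $H$ and the cycle $z$.

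First, I would take $H \subset B^{\times,n}(\BA_\rf)$ to be the diagonal copy of $B^\times(\BA_\rf)$, embedded via $g \mapsto (g,g,\ldots,g)$. The hypothesis of \Cref{generalcor} on $\Pi$, that there are no nonzero diagonal-$B^\times(\BA_\rf)$-invariant linear forms, is then precisely the hypothesis of \Cref{general} for this choice of $H$.

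Next, I would take $z = ([\Delta_{U,n}])_U \in \Ch(n)$, where the levels are normalized so that the pushforward compatibility along transition morphisms $X_{U'}^n \to X_U^n$ holds (the point being that each $X_{U'} \to X_U$ has a well-defined degree, which scales the pushforward of the diagonal to the diagonal). The key observation is that this compatible system is $H$-invariant: an element $g \in B^\times(\BA_\rf)$ embedded diagonally acts on $\vpl_U X_U^n$ by applying the same Hecke correspondence to each factor simultaneously, and such a correspondence carries the diagonal of $X_U^n$ to the diagonal, since points of the form $(x,x,\ldots,x)$ are mapped to sums of points of the same form. Thus $g^\ast [\Delta_n] = [\Delta_n]$ in $\Ch(n)$.

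With these two verifications, \Cref{general} applies and yields $\pr_{f,*}[\Delta_n]=0$ for all $f \in \Pi$, which by the definition of $[\Delta_n]_f$ is exactly the desired conclusion. The main (and only) subtle point is the $H$-invariance of the diagonal cycle in the pro-system $\Ch(n)$; this is essentially formal but requires tracing through the definitions of the action of $B^{\times,n}(\BA_\rf)$ on $\Ch(n)$ and the transition morphisms, and this is where the interpretation of ``diagonal cycle'' as a compatible family must be made precise. Apart from this bookkeeping, the corollary is an immediate specialization of the theorem.
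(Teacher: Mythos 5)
Your proposal is correct and is exactly the deduction the paper intends: specialize \Cref{general} to $H$ the diagonal $B^\times(\BA_\rf)$ and $z$ the (suitably normalized) compatible system of diagonal classes, whose $H$-invariance is cleanest to see at infinite level, where $(g,\dots,g)$ acts as an automorphism of $(\vpl_U X_U)^n$ preserving the diagonal (compare \Cref{eg:van}). No further comment is needed.
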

\begin{rmk}
In the case $n=3$ and  $\Pi=\boxtimes_{i=1}^n\pi_{i}$,   this  condition has been extensively studied in \cite{Qiufin}.
\end{rmk}

     Here are some  easy examples for applying this corollary to $\Pi=\boxtimes_{i=1}^n\pi_{i}$.

      \begin{eg} \label{eg:van}
 
 (1) If $n=1$, let $H= B^{\times}(\BA_\rf)$ and we have $[\Delta_1]_{f_1}  = 0 $.
 
 (2) If $n=2$, $H$ is the diagonal $B^{\times}(\BA_\rf)$, and $z_U =[\Delta_2]  $, then $[\Delta_2]_{f_1\otimes f_2}  = 0 $
 if $\pi_1\not\cong \pi_2^\vee$.
 
 (3) If the product of the central characters of $\pi_i$'s is not trivial, then  $[\Delta_n]_{f_1\otimes f_2,...\otimes f_n}= 0 $.
 \end{eg}
  For a subset $I$ of $\{1,2,...,n\}$, let   $p_I$ be the projection of $X_U^n$ to the product of the components indexed by $I$.

\begin{prop} \label{prop:diag_van}

(1) Assume   every  geometrically connected component of $X_{U}$ is hyperelliptic.  
 Then $$[\Delta_3]_{f_1\otimes f_2\otimes f_3}   =  -\sum_{\substack{I \coprod J = \{1,2,3\} \\ I = \{a,b\}, J=\{c\}}}p_I^* [\Delta_{2}]_{f_a\otimes f_b} \cdot p_J^* \pr_{f_c,*}\xi 
    $$
for  all $f_i\in \pi_i^U$.  

(2) Further  assume   $\pi_3\not\cong \pi_1^\vee$ and  $\pi_3\not\cong \pi_2^\vee$.
Then $$[\Delta_3]_{f_1\otimes f_2\otimes f_3}  =  -p_{1,2}^* [\Delta_{2}]_{f_1\otimes f_2} \cdot p_{3}^* \pr_{f_3,*}\xi ,$$
for  all $f_i\in \pi_{i}^U$.

(3) Further   assume $\pi_i\not\cong \pi_j^\vee$ for any $i\neq j$ in $\{1,2,3\}$.
Then   $$[\Delta_3]_{f_1\otimes f_2\otimes f_3}   = 0 $$
for  all $f_i\in \pi_{i}^U$.
 
  \end{prop}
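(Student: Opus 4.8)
The plan is to prove (1) first; parts (2) and (3) then follow formally. Indeed, granting (1), under the hypotheses of (2) one has $\pi_1\not\cong\pi_3^\vee$ and $\pi_2\not\cong\pi_3^\vee$, so $[\Delta_2]_{f_1\otimes f_3}=[\Delta_2]_{f_2\otimes f_3}=0$ by \Cref{eg:van}(2), and only the term with $I=\{1,2\}$ survives; under the additional hypothesis of (3) also $[\Delta_2]_{f_1\otimes f_2}=0$, so the whole sum vanishes. So everything comes down to (1).

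For (1), the key input will be a geometric identity in $\Ch_1(X_U^3)_\BQ$ encoding the hyperelliptic structure. I will assume $X_U$ is geometrically connected, the general case following by running the argument on each geometrically connected component and summing, as in \cite{QZ2}. So write $X=X_U$, hyperelliptic of genus $g\geq 2$, let $\phi\colon X\to\BP^1$ be the degree-$2$ map and $\sigma$ the hyperelliptic involution, and recall $\phi^*[\mathrm{pt}]=\tfrac{1}{g-1}[K_X]=2\xi$ in $\Ch_0(X)_\BQ$. Let $\Gamma_i\in\Ch_1(X^3)$ ($i=1,2,3$) be the image of $\Delta_3$ under the automorphism of $X^3$ which is $\sigma$ on the $i$-th factor and the identity elsewhere, and let $E_i=p_{\{a,b\}}^*(\xi\times\xi)$ (that is, $X\times\xi\times\xi$ with the free factor in position $i$, where $\{a,b\}=\{1,2,3\}\smallsetminus\{i\}$). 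Pulling back the small diagonal of $(\BP^1)^3$ along the finite flat map $\phi\times\phi\times\phi$: its cycle-theoretic preimage is exactly $\Delta_3+\Gamma_1+\Gamma_2+\Gamma_3$ (four distinct curves, each with multiplicity one, as one reads off at a non-Weierstrass point, where $\phi$ is étale), while from $[\text{small diagonal}]=\ell_1+\ell_2+\ell_3$ (the three coordinate rulings $\mathrm{pt}\times\cdots\times\BP^1\times\cdots\times\mathrm{pt}$) the same pullback equals $4(E_1+E_2+E_3)$, since $(\phi\times\phi)^*(\mathrm{pt}\times\mathrm{pt})=(2\xi)\times(2\xi)$. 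Hence
$$[\Delta_3]+[\Gamma_1]+[\Gamma_2]+[\Gamma_3]=4\big([E_1]+[E_2]+[E_3]\big)\qquad\text{in }\Ch_1(X^3)_\BQ.$$
This is a rescaling of the vanishing of the Gross--Kudla--Schoen modified diagonal for a hyperelliptic curve; alternatively the identity can be checked cohomologically (using that $\sigma$ acts by $-1$ on $H^1(X)$) and then lifted to rational equivalence via the $(\BP^1)^3$-pullback just described.

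I will then apply $\pr_{f,*}$ with $f=f_1\otimes f_2\otimes f_3$ to this identity, using three facts. First, $\pr_{f_i,*}[X]=0$ for each $i$: this is \Cref{eg:van}(1) (equivalently, $\pr=\pi_{2g_i-1}$ annihilates $\Ch_1(A_i)=\Ch^{g_i-1}(A_i)$ in Beauville's decomposition, while $(f_i)_*[X]\in\Ch_1(A_i)$), so $\pr_{f,*}[E_i]=\pr_{f_i,*}[X]\boxtimes(\cdots)=0$. Second, since $\pr_{f_i}\circ\Gamma_\sigma=\pr_{f_i\circ\sigma}$, since $f\mapsto\pr_f$ is additive, and since $\sigma$ acts by $-1$ on $\Jac(X)$ so that $f_i\circ\sigma=-f_i+c_i$ for a (locally) constant morphism $c_i\colon X\to A_i$, one gets $\pr_{f,*}[\Gamma_i]=[\Delta_3]_{\cdots\otimes(f_i\circ\sigma)\otimes\cdots}=-[\Delta_3]_{f}+[\Delta_3]_{\cdots\otimes c_i\otimes\cdots}$. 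Third, evaluating $f_i\circ\sigma=-f_i+c_i$ at a Weierstrass point $w$ (where $\sigma(w)=w$) gives $c_i=2f_i(w)$ as a point of $A_i$; since $\xi=[w]$ in $\Ch_0(X)_\BQ$ this gives $\pr_{f_i,*}\xi=\pi_{2g_i-1,*}[f_i(w)]$, and a direct intersection computation — using $\pr_{c_i}=X\times\pi_{2g_i-1,*}[c_i]$ and the linearity of $\pi_{2g_i-1,*}$ on $0$-cycles through the Albanese — identifies $[\Delta_3]_{\cdots\otimes c_i\otimes\cdots}$ with a scalar multiple of $p_{\{i\}}^*\pr_{f_i,*}\xi\cdot p_{\{a,b\}}^*[\Delta_2]_{f_a\otimes f_b}$. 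Substituting everything into the pushed-forward identity, the four occurrences of $[\Delta_3]_f$ combine into a nonzero multiple of it, the $E_i$-terms drop out, and solving the resulting linear relation for $[\Delta_3]_f$ yields exactly the formula in (1).

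The hardest part will be the bookkeeping in the last two paragraphs: establishing the geometric identity with the correct multiplicities, and — more delicately — computing the ``constant'' contributions $[\Delta_3]_{\cdots\otimes c_i\otimes\cdots}$ correctly. Here one must be careful that $\pr_{c_i}$ is \emph{not} zero: it annihilates $\Ch_1(X)$ but not $\Ch_0(X)$, and must be handled through the Beauville component $\pi_{2g_i-1,*}[c_i]\in\Ch_0(A_i)$, then matched against $\pr_{f_i,*}\xi$ via the relation $c_i=2f_i(w)$. Passing from the connected case to the general one also requires keeping track of how $\xi$ and the Weierstrass points restrict to the geometric components of $X_U$.
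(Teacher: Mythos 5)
Your proposal is correct, and its skeleton coincides with the paper's: use hyperellipticity to produce a rational-equivalence identity in $\Ch_1(X_U^3)$ relating $[\Delta_3]$ to degenerate cycles, then apply $\pr_{f,*}$ and kill the unwanted terms with \Cref{eg:van}(1)(2). The difference is that the paper simply quotes Gross--Schoen \cite[4.5]{GS} for the vanishing of the modified diagonal $[\Delta_3]_\xi$ of a hyperelliptic curve and then expands \eqref{eq:mod diag}, whereas you re-derive the needed input from scratch: the flat pullback of the small diagonal of $(\BP^1)^3$ gives $[\Delta_3]+\sum_i[\Gamma_i]=4\sum_i[E_i]$ (your multiplicity count at a non-Weierstrass point is right), and you then push this forward directly, handling the twisted diagonals via $f_i\circ\sigma=-f_i+c_i$ and the additivity of $f\mapsto\pr_f$ from \cite[Corollary A.3.2]{QZ2}. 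This is essentially an unpacking of the proof of \cite[4.5]{GS} combined with the expansion step, so it buys self-containedness at the cost of the bookkeeping you flag (the constant terms $[\Delta_3]_{\cdots\otimes c_i\otimes\cdots}=2\,p_{\{a,b\}}^*[\Delta_2]_{f_a\otimes f_b}\cdot p_{\{i\}}^*\pr_{f_i,*}\xi$, using $\xi=[w]$ for a Weierstrass point and the additivity of $Q\mapsto\pi_{2g-1,*}[Q]$ — all of which you handle correctly; the coefficient of $[\Delta_3]_f$ in the assembled relation is $1-3=-2\neq0$). Two small remarks: your argument produces the right-hand side with a $+$ sign, but so does the paper's own proof when one expands \eqref{eq:mod diag} after setting $[\Delta_3]_\xi=0$, so the minus sign in the statement appears to be a typo and is immaterial since only the vanishing consequences are used; and the parenthetical suggestion that the identity ``can be checked cohomologically and then lifted to rational equivalence'' should be dropped, as homological vanishing does not lift to Chow groups — the flat-pullback argument is the actual proof.
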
    
  \begin{proof}
  The modified diagonal defined by Gross and Schoen \cite{GS,Zha10} is   the algebraic cycle
\begin{align}\label{eq:mod diag}
[\Delta_3]_{\xi} = [\Delta_3] - \sum_{\substack{I \coprod J = \{1,2,3\} \\ |I| = 2}}p_I^* [\Delta_{2}] \cdot p_J^* \xi + \sum_{\substack{I \coprod J = \{1,2,3\} \\ |I| = 1}}p_I^* [\Delta_{1}] \cdot p_J^*(\xi \times \xi)
\end{align}
 By hyperellipticity and  \cite[4.5]{GS},  $[\Delta_{\xi}] =0$.
 The proposition follows from  \Cref{eg:van} (1)(2).
  \end{proof}

Combining (1) and (2) of the proposition  with \eqref{xie2}, we have a corollary.

  \begin{cor}\label{corCM}  (1) Assume   every  geometrically connected component of $X_{U}$ is hyperelliptic.  
  Then $[\Delta_3]_{f_1\otimes f_2\otimes f_3}  =  0$
for  $f_i\in \pi_{i}^U$ such that $\pr_{f_i,*}[\fe] = 0$.

(2) Further  assume   $\pi_3\not\cong \pi_1^\vee$ and  $\pi_3\not\cong \pi_2^\vee$.
Then $[\Delta_3]_{f_1\otimes f_2\otimes f_3}  =  0$
for $f_i\in \pi_{i}^U$  such that $\pr_{f_3,*}[\fe] = 0.$

  \end{cor}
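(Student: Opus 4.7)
The plan is to combine Proposition \ref{prop:diag_van} with the equivalence \eqref{xie2} in a direct way. In each case, Proposition \ref{prop:diag_van} already reduces $[\Delta_3]_{f_1\otimes f_2\otimes f_3}$ to a sum of exterior products whose factors of the form $p_J^*\pr_{f_c,*}\xi$ are the only pieces not automatically known to vanish. So the strategy is to show these factors vanish under the hypothesis on $\pr_{f_i,*}[\fe]$.

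For part (1), I would start from the identity in Proposition \ref{prop:diag_van}(1), which writes $[\Delta_3]_{f_1\otimes f_2\otimes f_3}$ as the sum over partitions $I\coprod J=\{1,2,3\}$ with $|I|=2$ of $p_I^*[\Delta_2]_{f_a\otimes f_b}\cdot p_J^*\pr_{f_c,*}\xi$. For each of the three partitions, the index $c$ runs through $1,2,3$, so once I know $\pr_{f_i,*}\xi=0$ for every $i$, every summand vanishes. The hypothesis $\pr_{f_i,*}[\fe]=0$ gives this via \eqref{xie2}: $\pr_{f_i,*}\xi = 0$ if and only if $\pr_{f_i,*}[\fe]=0$. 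Substituting, each term $p_J^*\pr_{f_c,*}\xi$ is zero, hence the whole sum is zero.

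For part (2), the same argument is even shorter. Proposition \ref{prop:diag_van}(2) expresses $[\Delta_3]_{f_1\otimes f_2\otimes f_3}$ as the single product $-p_{1,2}^*[\Delta_2]_{f_1\otimes f_2}\cdot p_3^*\pr_{f_3,*}\xi$, in which only the index $3$ appears in the $\xi$-factor. The hypothesis is precisely $\pr_{f_3,*}[\fe]=0$, which by \eqref{xie2} yields $\pr_{f_3,*}\xi=0$, so the product is zero.

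There is no real obstacle to overcome, since all the substantive work was done in Proposition \ref{prop:diag_van} (via the Gross--Schoen modified diagonal vanishing for hyperelliptic curves) and in the identification \eqref{xie2} between $\xi$ and the elliptic-points divisor class modulo $\lambda$. The only thing to be careful about is checking that \eqref{xie2} applies, i.e.\ that $\deg K_U>0$ so that $\xi$ is actually defined; this will be ensured in the settings of later sections where hyperellipticity is invoked (since a hyperelliptic curve has genus $\geq 2$, hence $\deg K_U>0$ on each geometric component). Hence the corollary is immediate from what has already been established.
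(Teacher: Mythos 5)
Your proof is correct and is exactly the paper's argument: the corollary is obtained by combining Proposition \ref{prop:diag_van} (1) and (2) with the equivalence \eqref{xie2}, term by term. Your added remark that $\deg K_U>0$ (so that $\xi$ is defined) holds on hyperelliptic components is a reasonable point of care that the paper leaves implicit.
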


 \begin{rmk}  
  
  Note that  elliptic points are CM points and  $\fe$ is a sum of CM cycles. 
Then  the equation  $\pr_{f,*}[\fe] = 0$, if holds,  can usually be verified   by local conditions.  See \Cref{sec:genus2}.

   \end{rmk}

  \section{Triple product $L$-functions of central vanishing order at least 3} \label{sec5}
  
   We keep the notations from the last section, except that
from now on, the notation $\pi$ or $\pi_i$ will denote an  automorphic  representation 
of 
$B^\times $, instead of its finite part. We always assume that 
  it is  cuspidal holomorphic of weight $2$  in the sense that
its Jacquet--Langlands correspondence   to $\GL_{2,F}$  is   a   cuspidal automorphic representation  holomorphic of weight $2$.

  For a  place $v$ of $F$,  let  $\ep(B_v)=1$ if is $B_v$ is split,  and 
  let  $\ep(B_v)=-1  $ if is $B_v$ is   division. So if $v=\tau$, 
   $\ep(B_v)=1$ and for any othet infinite place $v$,    $\ep(B_v)=-1.$ Then
   \begin{equation*}\label{epB}
    \prod_v \ep(B_v) = 1.
   \end{equation*}
 Here the product   is   over all places of $F$.
  Let $\ep(1/2,\pi_{1,v}\otimes\pi_{2,v}\otimes\pi_{3,v})$ be the local root number  of $\pi_{1,v}\otimes\pi_{2,v}\otimes\pi_{3,v}$.
A result of Prasad \cite{Pra,Pra1} states the following.

 \begin{thm}\label{thm:Pra} If 
  $$\Hom_{B_v^\times}( \pi_{1,v}\otimes\pi_{2,v}\otimes\pi_{3,v},\BC )\neq\{0\},$$
then it has dimension 1 and 
$\ep(1/2,\pi_{1,v}\otimes\pi_{2,v}\otimes\pi_{3,v})=\ep(B_v)$.

\end{thm}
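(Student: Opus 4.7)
The plan is to establish the two assertions---uniqueness of the invariant functional and the $\ep$-dichotomy---by rather different methods. For uniqueness, I would invoke the Gelfand--Kazhdan technique: the diagonally embedded $B_v^\times$ should be a Gelfand triple in $B_v^\times \times B_v^\times \times B_v^\times$, in the sense that every biinvariant distribution on the triple product is invariant under the involution $(g_1,g_2,g_3) \mapsto (g_1^{-1},g_2^{-1},g_3^{-1})$. To verify this, one analyzes the orbits of the diagonal $B_v^\times$-action, stratifying the triple product into a regular locus parametrized by an invariant-theoretic quotient together with lower-dimensional exceptional strata, and checks on each stratum that the involution preserves invariant distributions. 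Standard Bernstein-style arguments on equivariant distributions along this stratification then yield multiplicity at most one.

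For the dichotomy, the strategy is to compute both sides in enough cases to force the identity by a rigidity principle. I would first dispose of the case where $B_v$ is split and at least one $\pi_{i,v}$ is an irreducible parabolic induction: there $\Hom_{\GL_2(F_v)}$ can be evaluated by Mackey's formula, and the resulting branching law, combined with the inductive property of local $\ep$-factors, yields the assertion. The archimedean case is treated analogously through the explicit decomposition of tensor products of holomorphic discrete series. The remaining case---three essentially discrete series, with $B_v$ arbitrary---is approached via the theta correspondence: the trilinear functional is constructed through a see-saw pair involving $\GSO(V) \times \Sp_4$ or $\Sp_6$, where $V$ is a four-dimensional quadratic space whose discriminant algebra is $B_v$. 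This reduces the existence question to a statement about theta lifts whose root numbers can be read off from Howe duality together with Kudla--Rallis-type computations.

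The main obstacle is the all-supercuspidal case with $B_v$ division, where one must pin down the correct quadratic space---split versus anisotropic---and identify $\ep(1/2,\pi_{1,v}\otimes\pi_{2,v}\otimes\pi_{3,v})$ with the Hasse invariant of $V$. This requires a delicate matching of local constants, either via the doubling integral or by working through explicit models of supercuspidals (Howe's tame construction; Bushnell--Kutzko types in the wild case), and it is where Prasad's original argument in \cite{Pra,Pra1} is most intricate. Given that the statement is used below purely to extract root-number information for triple products, I would in practice be content to cite \cite{Pra,Pra1} rather than reproduce the full argument.
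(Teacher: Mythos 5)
The paper offers no proof of this statement at all---it is quoted verbatim as a theorem of Prasad with a citation to \cite{Pra,Pra1}---and your proposal ultimately lands in the same place, namely deferring to those references. Your sketch of how Prasad's argument actually goes is a reasonable outline (with the minor caveat that the residual all-supercuspidal/dyadic case is settled in \cite{Pra1} by global methods, i.e.\ globalization and the global functional equation, rather than by the doubling integral or explicit models of supercuspidals), so there is nothing to correct relative to the paper.
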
 

   \begin{cor} 
    \label{corsign}

If 
  $$\Hom_{B_v^\times}( \pi_{1,v}\otimes\pi_{2,v}\otimes\pi_{3,v},\BC )\neq\{0\}$$
  for all finite places $v$ of $F$,
then $\ep(1/2,\pi_{1}\otimes\pi_{2}\otimes\pi_{3})=-1$.
 
\end{cor}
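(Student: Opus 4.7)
The plan is to factor the global $\ep$-factor as an Euler product of local $\ep$-factors and then match the local information against the product formula $\prod_v \ep(B_v) = 1$ for the quaternion algebra $B$.

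At each finite place $v$, the hypothesis combined with Theorem~\ref{thm:Pra} gives immediately
\[
\ep(1/2,\pi_{1,v}\otimes\pi_{2,v}\otimes\pi_{3,v}) \;=\; \ep(B_v).
\]
So the work is all at the archimedean places.

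Next, I would show that at every archimedean place of $F$ the local $\ep$-factor equals $-1$. Since each $\pi_i$ is cuspidal holomorphic of weight $2$, its $\GL_2(\BR)$-parameter at every archimedean place is the weight-$2$ discrete series, and the local $\ep$-factor at $s=1/2$ of the tensor product of three such discrete series is the classical value $-1$. Equivalently (and self-contained given Theorem~\ref{thm:Pra} once extended to archimedean places by Loke): at each archimedean $v\neq\tau$, $B_v$ is division, $\pi_{i,v}$ is essentially the trivial representation of the compact group $B_v^\times/F_v^\times$, so the Hom into $\BC$ is tautologically nonzero and Prasad's dichotomy gives $\ep_v = \ep(B_v) = -1$; at $v=\tau$, the Hom of three weight-$2$ holomorphic discrete series on $\GL_2(\BR)$ into $\BC$ vanishes, so Prasad's dichotomy forces $\ep_\tau \neq \ep(B_\tau) = +1$, whence $\ep_\tau = -1$. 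Either way, every archimedean place contributes $-1$.

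Finally I would assemble using the product formula. Write $d := [F:\BQ]$. Among archimedean places only $v=\tau$ has $\ep(B_v)=+1$, so $\prod_{v\mid\infty}\ep(B_v) = (-1)^{d-1}$, and the product formula $\prod_v \ep(B_v)=1$ then gives $\prod_{v\nmid\infty}\ep(B_v) = (-1)^{d-1}$. Putting the pieces together,
\[
\ep(1/2,\pi_1\otimes\pi_2\otimes\pi_3) \;=\; \Bigl(\prod_{v\mid\infty}(-1)\Bigr)\cdot\Bigl(\prod_{v\nmid\infty}\ep(B_v)\Bigr) \;=\; (-1)^d\cdot (-1)^{d-1} \;=\; -1.
\]
The one step that is not just bookkeeping is the archimedean computation, which is where I would spend the most care; it can be handled either by direct calculation of the epsilon factors for holomorphic weight-$2$ discrete series, or by invoking Loke's archimedean extension of Prasad's dichotomy as outlined above.
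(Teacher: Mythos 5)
Your proposal is correct and follows essentially the same route as the paper: Prasad's dichotomy (Theorem~\ref{thm:Pra}) converts the nonvanishing hypothesis into $\ep_v=\ep(B_v)$ at all finite places, the archimedean local factors are all $-1$ (the paper simply cites \cite[Section 9]{Pra} for this, while you also sketch the equivalent dichotomy argument on the compact form), and the product formula $\prod_v\ep(B_v)=1$ finishes the sign count exactly as in the paper's proof.
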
 
   \begin{proof}    At  every infinite place $v$,   $\ep(1/2,\pi_{1,v}\otimes\pi_{2,v}\otimes\pi_{3,v})=-1$ (see \cite[Section 9]{Pra}).    At  every infinite place $v$, $\ep(B_v)=-1$ for except one place. So 
by \Cref{thm:Pra}, we have
$$\ep(1/2,\pi_{1}\otimes\pi_{2}\otimes\pi_{3})=\prod_v\ep(1/2,\pi_{1,v}\otimes\pi_{2,v}\otimes\pi_{3,v})=-\prod_v \ep(B_v)=-1.$$
  \end{proof} 
  
   Let $S_B$ be the finite set of \textit{finite} places $v$ of $F$ such that $B_{v} $ is a division quaternion algebra.  
  Below we only consider  a maximal open compact subgroup   $U =\prod\limits_{v\text{ finite}} U_v \subset PB^\times(\BA_\rf)$ until the final remark (\Cref{final remark}).
In other words,
its   $v$-component is 
          \begin{equation}\label{Kv}
U_v= PB_{v }^\times \text{ for }v\in S_B,
 \end{equation}
 and 
             \begin{equation}\label{Kv0}
U_v=F_v^\times \GL_2\lb \cO_{F_v}\rb   \text{ or }  F_v^\times \pair{\Gamma_0\lb \fm_{F_v}\rb,  g_v}
\text{ for } v\not
      \in S_B,
 \end{equation}
 where $\fm_{F_v}$ is the maximal ideal of $\cO_{F_v}$, $\Gamma_0\lb \fm_{F_v}\rb$  is the subgroup  of matrices in
 $ \GL_2\lb \cO_{F_v}\rb  $ that are upper triangular modulo $\fm_{F_v}$ 
 and
$
                   g_v\in \begin{bmatrix}
0 & 1 \\
\cO_{F_v} \bsl \fm_{F_v}  & 0
\end{bmatrix}
$.
If $\pi^U\neq 0$, then $\pi$ has trivial central character and thus self-dual, and
  $\dim \pi_v^{U_v}=1$ for $v$ finite.
Explicitly, for $v\in S_B$, $\pi_v $ is the trivial representation of $B_{v }^\times$. Its Jacquet-Langlands correspondence to $\GL_2(F_v)$   is the
the Steinberg representation. 
In this cases, the root number $\ep(1/2,\pi_v)  = -1$.  
For $v\not \in S_B$, if
$U_v=F_v^\times \GL_2\lb \cO_{F_v}\rb$ for $ v\not
      \in S_B$, then $\pi_v$ is an unramified representation of  $\GL_2(F_v)$.  
If $U_v=F_v^\times \pair{\Gamma_0\lb \fm_v\rb,  g_v}$,  then 
 $\pi_v$  is the
the Steinberg representation twisted by the unique nontrivial unramified quadratic character of $F_v^\times$. 
In both cases, the root number $\ep(1/2,\pi_v)  = 1$.  

This lemma is not needed for the results in this  section. We will use it later.
\begin{lem} \label{lemroot} 
If $\pi^U\neq 0$,  then
$\ep(1/2,\pi)  = -1$.
 \end{lem}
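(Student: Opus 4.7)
The plan is to factor the global root number as a product of local root numbers and tabulate each factor using the information assembled in the paragraph preceding the lemma.

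First I would write
\[
\epsilon(1/2,\pi) = \prod_v \epsilon(1/2,\pi_v),
\]
the product ranging over all places of $F$. At each archimedean place, $\pi$ is holomorphic of weight $2$, so each local root number equals $-1$; this contributes a factor $(-1)^{[F:\BQ]}$. At each finite place $v \in S_B$, the local component $\pi_v$ is Steinberg (via Jacquet--Langlands from the trivial representation of $B_v^\times$), so the paper has already recorded $\epsilon(1/2,\pi_v) = -1$; this contributes $(-1)^{|S_B|}$. At each finite place $v \notin S_B$, the two possibilities for $U_v$ give either an unramified representation or the quadratic unramified twist of Steinberg, and the paper records that in both cases $\epsilon(1/2,\pi_v) = +1$.

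Putting these together yields
\[
\epsilon(1/2,\pi) = (-1)^{[F:\BQ] + |S_B|}.
\]
The remaining step is to identify the parity of the exponent using the quaternion algebra $B$. The set of places at which $B$ ramifies consists of the $[F:\BQ]-1$ archimedean places $\neq \tau$ together with the finite set $S_B$, and its cardinality must be even. Hence
\[
[F:\BQ] - 1 + |S_B| \equiv 0 \pmod{2},
\]
so $[F:\BQ] + |S_B|$ is odd and $\epsilon(1/2,\pi) = -1$.

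There is no real obstacle here: the only inputs are the three local facts listed just before the lemma and the even-ramification parity of $B$. The one point to watch is to make sure the $v \notin S_B$ case is handled for both allowed shapes of $U_v$ in \eqref{Kv0}, but in each case the local root number is $+1$ and so drops out of the product.
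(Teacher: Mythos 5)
Your proof is correct and is essentially the paper's own argument: both compute $\ep(1/2,\pi)$ as a product of local root numbers with the same three local inputs ($-1$ at archimedean places, $-1$ at Steinberg places in $S_B$, $+1$ elsewhere), and your appeal to the even cardinality of the ramification set of $B$ is exactly the paper's identity $\prod_v \ep(B_v)=1$ in different clothing.
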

\begin{proof} 
   At an infinite place, 
$\ep(1/2,\pi_v)  = -1$.  At  every infinite place $v$, $\ep(B_v)=-1$ for except one place.  And at  every infinite place $v$, as we have seen above the lemma, 
  $\ep(1/2,\pi_v) = \ep(B_v)$.
So   we have the first of the following equations
$\ep(1/2,\pi)  = -\prod_v \ep(B_v)=-1.$ 
\end{proof}

   This lemma is  essential for the results in this  section.  
       \begin{lem} \label{lemrootform} 
  If $\pi_i^U\neq 0$ for $i=1,2,3$ and $v$ finite,  then $$\Hom_{B_v^\times}( \pi_{1,v}\otimes\pi_{2,v}\otimes\pi_{3,v},\BC )\neq\{0\}.$$
For $\ell\neq0$ in it and $\phi_i\neq 0\in\pi_{i,v}^{U_v}$ (both unique up to scalar as  seen above), we  have 
\begin{equation}
  \label{lneq0}
 \ell(\phi_1\otimes\phi_2\otimes\phi_3)\neq0. 
\end{equation}
 \end{lem}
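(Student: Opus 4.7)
The plan is to argue one finite place at a time, using the explicit description of $\pi_{i,v}$ recalled just above the lemma: for $v\in S_B$ it is the trivial representation of $B_v^\times$, while for $v\notin S_B$ it is either an unramified principal series of $\GL_2(F_v)$ or the Steinberg representation twisted by the unique unramified quadratic character of $F_v^\times$, depending on which alternative in \eqref{Kv0} governs $U_v$.

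To produce a nonzero $\ell$, note that when $v\in S_B$ the triple tensor reduces to $\BC\otimes\BC\otimes\BC$, so $\Hom_{B_v^\times}(\BC,\BC)=\BC\neq 0$ automatically. When $v\notin S_B$, I would apply the dichotomy of Prasad (\Cref{thm:Pra}): since $\ep(B_v)=+1$, nonvanishing of the Hom space is equivalent to $\ep(1/2,\pi_{1,v}\otimes\pi_{2,v}\otimes\pi_{3,v})=+1$. A direct case-by-case computation of the triple-product local root number --- using that each factor has individual $\ep(1/2,\pi_{i,v})=+1$ (as recorded just above \Cref{lemroot}) and that an unramified quadratic twist of Steinberg shifts its root number from $-1$ to $+1$ in a controlled way --- verifies this equality in every combination of unramified and twisted-Steinberg factors.

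For the nonvanishing of $\ell(\phi_1\otimes\phi_2\otimes\phi_3)$, both the Hom space and $\pi_{1,v}^{U_v}\otimes\pi_{2,v}^{U_v}\otimes\pi_{3,v}^{U_v}$ are one-dimensional, so it suffices to show that $\ell$ is not identically zero on the latter. When $v\in S_B$ this is vacuous because $\pi_{i,v}^{U_v}=\pi_{i,v}$ and $\ell\neq 0$. When $v\notin S_B$, I would realize $\phi_i$ as the spherical vector, respectively the unique $U_v$-fixed line in the twisted-Steinberg model, and evaluate the local trilinear period: in the fully unramified case this is the classical spherical triple integral explicitly computed by Garrett and Harris--Kudla, and in the mixed cases Ichino's explicit formula expresses the period as a product of nonzero local $L$-values and matrix-coefficient integrals whose nonvanishing can be checked directly in each induced/special model.

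The main obstacle is precisely this bookkeeping at $v\notin S_B$: one must both pin down the triple-product epsilon factor in every combination of unramified and twisted-Steinberg local types, and then verify the nonvanishing of each corresponding local trilinear integral on the prescribed new vector. Once these local facts are recorded, the lemma follows formally from Prasad's theorem and the uniqueness (up to scalar) of $U_v$-fixed vectors in $\pi_{i,v}$.
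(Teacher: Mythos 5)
Your proposal is correct in outline but takes a genuinely different (and much heavier) route than the paper. The paper's proof is two sentences: for $v\in S_B$ the representation $\pi_{i,v}$ is trivial so the claim is immediate (you do the same), and for $v\notin S_B$ it simply cites Gross--Prasad \cite[Propositions 6.1, 6.3]{GP}, whose content is exactly the two assertions of the lemma for the local types at hand (all factors unramified, or some factors equal to the Steinberg twisted by the nontrivial unramified quadratic character, with the new vector further fixed by the Atkin--Lehner element in $U_v$). What you propose instead is to re-derive that content: first the existence of a nonzero $\ell$ via Prasad's dichotomy plus a case-by-case triple-product epsilon computation (which does come out to $+1$ in every combination --- note the triple-product epsilon is not the product of the individual $\ep(1/2,\pi_{i,v})$, so the computation you defer is genuinely needed), and then the test-vector nonvanishing via the Garrett/Piatetski-Shapiro--Rallis spherical computation and Ichino-type explicit local period integrals in the mixed and all-special cases. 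This would work --- the Hom space and the space of $U_v$-fixed vectors are both one-dimensional, so nonvanishing of the Ichino integral on the new vectors does imply \eqref{lneq0} --- but be aware that the deferred ``bookkeeping'' is the entire mathematical content of the lemma, and it is precisely what Gross--Prasad already carried out in the language of test vectors; citing their Propositions 6.1 and 6.3 replaces all of your epsilon-factor and local-integral computations. The one thing your route buys is independence from the test-vector literature and, via Ichino's formula, a quantitative refinement (an explicit value of $|\ell(\phi_1\otimes\phi_2\otimes\phi_3)|^2$) that the qualitative statement does not require.
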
 
   \begin{proof} 
   If $v\in S_B$ so that $\pi_v=1$, the lemma is trivial. Otherwise, the lemma is \cite[Proposition 6.1, 6.3]{GP}.
  \end{proof}

Then by \Cref{corsign}, we have the following
   \begin{cor} \label{corsign2}

If $\pi_i^U\neq 0$ for $i=1,2,3$,  then  $\ep(1/2,\pi_1\otimes\pi_2\otimes\pi_3)=-1$.
\end{cor}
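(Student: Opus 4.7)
The plan is to combine the two preceding results directly, since \Cref{corsign2} is essentially a packaged consequence of them. First, I would invoke \Cref{lemrootform}: under the hypothesis $\pi_i^U \neq 0$ for $i=1,2,3$, the lemma guarantees that
\[
\Hom_{B_v^\times}(\pi_{1,v}\otimes\pi_{2,v}\otimes\pi_{3,v},\BC)\neq\{0\}
\]
for every finite place $v$ of $F$. This verifies exactly the hypothesis of \Cref{corsign} at all finite places.

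Then I would appeal to \Cref{corsign} to conclude that $\ep(1/2,\pi_1\otimes\pi_2\otimes\pi_3) = -1$. The proof is genuinely a two-line chaining of the lemma and corollary just proved, so there is no real obstacle; the substantive content is already packaged into \Cref{thm:Pra} (Prasad's local dichotomy), the infinite-place sign computation recorded inside the proof of \Cref{corsign}, and the local Hom-nonvanishing assertion of \Cref{lemrootform}. The only thing to note is that the condition $\pi_i^U \neq 0$ only provides information at finite places, which is why the hypothesis matches \Cref{corsign} (which requires the local Hom condition precisely at finite places, the infinite-place contribution being handled separately in its proof).
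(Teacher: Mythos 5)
Your proposal is correct and matches the paper's own argument exactly: the paper derives \Cref{corsign2} by noting that \Cref{lemrootform} supplies the nonvanishing of $\Hom_{B_v^\times}(\pi_{1,v}\otimes\pi_{2,v}\otimes\pi_{3,v},\BC)$ at every finite place, which is precisely the hypothesis of \Cref{corsign}. Nothing further is needed.
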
 
Let $\pi_{i,\rf}$ denote the finite component of $\pi_i$. Then $\pi_{i,\rf}  = \pi_{A_i,\iota_i}$ for some pair $(A_i,\iota_i)$ as in \Cref{YZZ322} (2). 

  The main theorem of this section is the following.
 \begin{thm}\label{thmmain}
Assume \begin{itemize}
  \item $U$ satisfies \eqref{Kv} and \eqref{Kv0},
  \item  every  geometrically connected component of $X_{U}$ is hyperelliptic, and 
\item $\pi_i^U\neq 0$ for $i=1,2,3$.
\end{itemize} 
If   $\pi_i$'s are pairwise non-isomorphic,
then $$\ord_{s =1/2} L(s,\pi_1\otimes\pi_2\otimes\pi_3)\geq 3. $$ 
\end{thm}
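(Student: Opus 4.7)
The plan is to deduce odd-order vanishing from the global root number, and to promote this to order at least three by combining the Chow-group vanishing coming from hyperellipticity with the arithmetic Gross--Kudla formula of \cite{YZZ0}.

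First, I would observe that each $\pi_i$ is self-dual: since $\pi_i^U\neq 0$ and $U_v$ contains $F_v^\times$ at every finite place by \eqref{Kv0}, $\pi_i$ has trivial central character, so $\pi_i\cong\pi_i^\vee$. By \Cref{corsign2}, $\ep(1/2,\pi_1\otimes\pi_2\otimes\pi_3)=-1$, and the triple being self-dual, the standard functional equation forces the central vanishing order to be odd. In particular, $\ord_{s=1/2}L(s,\pi_1\otimes\pi_2\otimes\pi_3)\geq 1$.

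Second, pairwise non-isomorphism of the $\pi_i$'s combined with self-duality gives $\pi_i\not\cong\pi_j^\vee$ for all $i\neq j$. Together with the hyperellipticity hypothesis, \Cref{prop:diag_van}(3) then shows that the pushforward cycle $[\Delta_3]_{f_1\otimes f_2\otimes f_3}$ vanishes in $\Ch_1(A_1\times A_2\times A_3)$ for every choice of $f_i\in\pi_i^U$; a fortiori the pushforward of the Gross--Schoen modified diagonal $[\Delta_3]_\xi$ also vanishes.

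Finally, I would apply the arithmetic Gross--Kudla height formula of Yuan, S.~Zhang, and W.~Zhang \cite{YZZ0}, which expresses $L'(1/2,\pi_1\otimes\pi_2\otimes\pi_3)$ (up to an explicit nonzero constant) as the Beilinson--Bloch height pairing of the pushforward of $[\Delta_3]_\xi$ with itself, multiplied by a product of local matrix-coefficient integrals over the finite places. The main obstacle will be verifying the hypotheses of this formula for our triple and the test vector $f_1\otimes f_2\otimes f_3\in\boxtimes_i\pi_i^U$, and showing that the local factors are nonzero; the latter is precisely the content of \Cref{lemrootform}, which guarantees nonvanishing of the local $B_v^\times$-invariant trilinear form on the chosen level-$U_v$ test vectors. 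Once the formula is in place, the Chow vanishing from the second step forces $L'(1/2,\pi_1\otimes\pi_2\otimes\pi_3)=0$, and combined with the odd-order vanishing from the first step this yields $\ord_{s=1/2}L(s,\pi_1\otimes\pi_2\otimes\pi_3)\geq 3$.
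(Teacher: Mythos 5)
Your proposal is correct and follows essentially the same route as the paper: root number $-1$ via \Cref{corsign2} to get odd order, cycle vanishing via hyperellipticity and \Cref{prop:diag_van}(3), and the Gross--Kudla height formula of \cite{YZZ0} together with the test-vector nonvanishing \eqref{lneq0} from \Cref{lemrootform} to conclude $L'(1/2,\pi_1\otimes\pi_2\otimes\pi_3)=0$. The only cosmetic difference is that you phrase the height formula in terms of the pushforward of $[\Delta_3]_\xi$ rather than $[\Delta_3]_{f_1\otimes f_2\otimes f_3}$, but under the pairwise non-isomorphism hypothesis these coincide.
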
 

\begin{proof}
  The generalized Gross--Kudla conjecture \cite{GK} is proved  in \cite{YZZ0}  under the first two assumptions in
the theorem. 
By this and  \eqref{lneq0}, 
the central derivative $L'(1/2, \pi_1\otimes\pi_2\otimes\pi_3)$ is a multiple of the height of 
$[\Delta_3]_{f_1\otimes f_2\otimes f_3}  $, where  $f_i\neq 0\in \pi_{i,\rf}^U$, unique up to scalar. 
  But by 
\Cref
{prop:diag_van} (3),   $[\Delta_3]_{f_1\otimes f_2\otimes f_3}  =0$. 
Now the theorem follows from \Cref{corsign2}.
\end{proof}
We will use this theorem to prove \Cref{thmintro}.

Replacing \Cref
{prop:diag_van} (3) by \Cref
{corCM}  (2), we have the following
   more general theorem.
 \begin{thm}\label{thmmainCM}
Assume \begin{itemize}
  \item $U$ satisfies \eqref{Kv} and \eqref{Kv0},
  \item  every  geometrically connected component of $X_{U}$ is hyperelliptic, and 
\item $\pi_i^U\neq 0$ for $i=1,2,3$,
\item   $\pi_3\not\cong \pi_1 $,  $\pi_3\not\cong \pi_2 $.
\end{itemize} 
Let $f_3\neq 0\in \pi_{3,\rf}^U$, unique up to scalar. 
If     $\pr_{f_3,*}[\fe] = 0$, 
then $$\ord_{s =1/2} L(s,\pi_1\otimes\pi_2\otimes\pi_3)\geq 3. $$ 
\end{thm}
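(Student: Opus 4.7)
The plan is to run the argument of \Cref{thmmain} essentially verbatim, only substituting the appeal to \Cref{prop:diag_van} (3) by one to \Cref{corCM} (2): the extra hypothesis $\pr_{f_3,*}[\fe]=0$ is exactly what lets us trade the three-fold non-isomorphism condition of \Cref{thmmain} for the present two-fold one, so I would not expect any fundamentally new ideas to be needed.

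First I would apply \Cref{corsign2} to conclude that the global root number equals $-1$, so that the central order of vanishing is automatically odd and in particular at least $1$. To push this up to $\geq 3$, it then suffices to show that the central derivative $L'(1/2,\pi_1\otimes\pi_2\otimes\pi_3)$ vanishes. For this I would invoke the generalized Gross--Kudla formula of \cite{YZZ0}, valid under the standing assumptions on $U$, which identifies this derivative, up to a nonzero constant determined by the local trilinear forms at each finite place evaluated on the newvectors, with the Beilinson--Bloch height of the cycle $[\Delta_3]_{f_1\otimes f_2\otimes f_3}$, where $0\ne f_i\in \pi_{i,\rf}^U$ is unique up to scalar. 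That each local constant is in fact nonzero is precisely \eqref{lneq0} in \Cref{lemrootform}, so the relation is a genuine proportionality.

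The remaining and decisive step is the cycle vanishing $[\Delta_3]_{f_1\otimes f_2\otimes f_3}=0$, for which I would quote \Cref{corCM} (2). Its geometric hypothesis is the hyperellipticity of every geometrically connected component of $X_U$, which is given, and $\pr_{f_3,*}[\fe]=0$ is also given. The only small thing to verify is the translation from $\pi_3\not\cong\pi_i$ to $\pi_3\not\cong\pi_i^\vee$ for $i=1,2$: since the level group $U$ of \eqref{Kv}--\eqref{Kv0} contains the center of $B^\times(\BA_\rf)$, any $\pi_i$ with $\pi_i^U\ne 0$ has trivial central character and is thus self-dual, $\pi_i\cong\pi_i^\vee$, as recalled just before \Cref{lemrootform}. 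Hence \Cref{corCM} (2) applies, the cycle vanishes, so does the derivative, and the odd root number forces the order of vanishing to be at least $3$. The hard part, insofar as there is one, is just keeping this self-duality bookkeeping straight; there is no new analytic or geometric content beyond the already-cited YZZ formula and \Cref{corCM} (2).
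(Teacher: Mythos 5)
Your proposal is correct and is exactly the paper's argument: the paper derives \Cref{thmmainCM} by repeating the proof of \Cref{thmmain} (root number $-1$ via \Cref{corsign2}, the Gross--Kudla formula of \cite{YZZ0} together with \eqref{lneq0}, then cycle vanishing) with \Cref{prop:diag_van} (3) replaced by \Cref{corCM} (2). Your extra remark on self-duality reconciling $\pi_3\not\cong\pi_i$ with $\pi_3\not\cong\pi_i^\vee$ is a correct and worthwhile bookkeeping point that the paper leaves implicit.
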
  
  When  $ \pi_1\cong \pi_2\cong \pi_3$, $L(s,\pi_1\otimes\pi_2\otimes\pi_3)  $ has  central vanishing order at least 3 automatically. 
  We discuss this in the coming section.

 \begin{rmk}\label{final remark}
 (1) The condition we impose on the levels of $\pi_i$'s, i.e., on $U$, above is for the purpose of getting \Cref{lemrootform}, especially \eqref{lneq0}.
 Slightly more generally,  we may further allow $U_v$ to be   $F_v^\times  \cO_{B_v}^\times$    for $v\in S_B$ and the maximal order $  \cO_{v} $ of $B_v$,
 and allow $U_v$ to be
             $  F_v^\times  \Gamma_0\lb \fm_{F_v}\rb $
 for $ v\not
      \in S_B$. Then       we need to match the representations so that the lemma still holds. For example, for $v\in S_B$, let $\pm1$ be the two  representations
      of  $B_v^\times/F_v^\times O_v^\times\cong \BZ/2\BZ $. Then  
   we need  the product of    $\pi_{i,v}$'s to be 1. In particular, if the Atkin--Lehner involution, defined by the generator $B_v^\times/F_v^\times O_v^\times$, is hyperelliptic, then 
   all $\pi_{i,v}$'s are $-1$. And  the lemma  will not hold.  
      
   (2)   For $ v\not
      \in S_B$, more generally, the Atkin--Lehner involution is defined  if 
 $U_v=   F_v^\times \Gamma_0\lb \fm_{F_v}^{n_v}\rb$ for some positive integer $n_v$.  Indeed, any $                   g_v\in \begin{bmatrix}
0 & 1 \\
 \fm_{F_v} ^{n-1} \bsl \fm_{F_v}^n  & 0
\end{bmatrix}
$ normalizes $U_v$ and defines   the same Atkin--Lehner involution $w_v$ on $X_U$ at $v$.
If         $\dim \pi_{i,v}^{  U_v} = 1 
$, then  $w_v$  acts on $ \pi_{i,v}^{  U_v} $ by $\pm1$. 
If the action is by $-1$
 and $f_i \neq 0  \in \pi_{i,\rf}^U$,   one can use 
  $w_v$ to show that
    $[\Delta_3]_{f_1\otimes f_2\otimes f_3}     =0$. Indeed,  $(w_v,  w_v,  w_v)$ fixes 
     $[\Delta_3]$ and thus fixes  $[\Delta_3]_{f_1\otimes f_2\otimes f_3}  $, but sends each $f_i$ to $-f_i$.
    The case when $w_v$  is a   hyperelliptic involution is simply the case that any $\pi$ with $ \pi_{i,\rf}^U\neq0$ satisfies that 
    $\dim \pi_{i,v}^{  U_v} = 1 $ and $w_v$  acts on $ \pi_{i,v}^{  U_v} $ by $-1$.      However, in this case, \eqref{lneq0} fails by the same argument. Thus  $w_v$ being a   hyperelliptic involution  can not be used to  produce higher central vanishing of  $L$-functions. 
   
   (3) Finally, the above remarks on an Atkin--Lehner involution at a single finite place extend to a product of Atkin--Lehner involutions at several finite places in an obvious way.

\end{rmk}

   \section{Decomposable triple product $L$-functions}  \label{sec:6}  

With the generalized Gross--Kudla conjecture\cite{GK,YZZ0},
we have used the (modified) diagonal cycle to study triple product $L$-functions. Now we take the opposite direction, and want to understand the behavior of the   diagonal cycle for a general Shimura curve predicted by triple product $L$-functions. 
Keep the notations from the last section, except that now we abandon \eqref{Kv} and \eqref{Kv0}. 
We analyze two cases   when the triple product  $L$-function is decomposable. Such  consideration appeared in an earlier version of \cite{YZZ0} for the heights of  the   diagonal cycle.

First, if $ \pi_1\cong \pi_2^\vee=:\pi$, then 
\begin{equation}
  L(s, \pi_1\otimes\pi_2\otimes\pi_3)  = L(s,\ad \pi\otimes \pi_3) L(s,\pi_3).\label{LLL}
\end{equation}
Assume $\pi_3$ is   self-dual
with  root number $\ep(1/2, \pi_3)=-1$. Then $L(s, \pi_1\otimes\pi_2\otimes\pi_3)$  has    vanishing order   at least 3  at $s =1/2$ if and only if 
$L(s,\ad \pi\otimes \pi_3) $
has    vanishing order   at least 2  at $s =1/2$  or  $
L(s,\pi_3)$ has    vanishing order   at least 3  at $s =1/2.$
Then  by \Cref{lemroot},   we have the following.
\begin{prop}\label{rmkdecom}
  In the situation of \Cref{thmmainCM}, 
 if $ \pi_1\cong \pi_2 =:\pi$ (which are self-dual) and $
L'(1/2,\pi_3)\neq 0$, then 
 $L(s,\ad \pi\otimes \pi_3) $
has    vanishing order   at least 2  at $s =1/2$.
\end{prop}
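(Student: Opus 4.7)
The plan is to combine \Cref{thmmainCM} with the factorization \eqref{LLL} and the parity forced by \Cref{lemroot}. First I would observe that all hypotheses of \Cref{thmmainCM} are in force: the two non-isomorphism conditions $\pi_3 \not\cong \pi_1$ and $\pi_3 \not\cong \pi_2$ both collapse, under $\pi_1 \cong \pi_2 = \pi$, to the single condition $\pi_3 \not\cong \pi$, and everything else is inherited from the standing setup. Applying the theorem yields
\begin{equation*}
\ord_{s=1/2} L(s, \pi_1 \otimes \pi_2 \otimes \pi_3) \geq 3.
\end{equation*}

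Next I would factor using \eqref{LLL}. Since $\pi_1 \cong \pi_2$ is self-dual by hypothesis, $\pi_1 \cong \pi_2^\vee$, so \eqref{LLL} applies and gives the identity $L(s, \pi_1 \otimes \pi_2 \otimes \pi_3) = L(s, \ad\pi \otimes \pi_3) \cdot L(s, \pi_3)$. Both factors on the right are holomorphic at the central point: $L(s, \pi_3)$ is a standard cuspidal $\GL_{2,F}$ $L$-function, and $L(s, \ad\pi \otimes \pi_3)$ is a $\GL_3 \times \GL_2$ Rankin--Selberg $L$-function, entire in the critical strip thanks to the non-isomorphism $\pi_3 \not\cong \pi$. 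Hence the orders of vanishing at $s = 1/2$ add:
\begin{equation*}
\ord_{s=1/2} L(s, \ad \pi \otimes \pi_3) + \ord_{s=1/2} L(s, \pi_3) \geq 3.
\end{equation*}

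Finally I would pin down $\ord_{s=1/2} L(s, \pi_3)$. The hypothesis $L'(1/2, \pi_3) \neq 0$ gives $\ord_{s=1/2} L(s, \pi_3) \leq 1$. On the other hand, $\pi_3^U \neq 0$ makes \Cref{lemroot} applicable, yielding $\ep(1/2, \pi_3) = -1$, so the functional equation forces an odd vanishing order at the center. Combined, $\ord_{s=1/2} L(s, \pi_3) = 1$, and subtracting yields $\ord_{s=1/2} L(s, \ad\pi \otimes \pi_3) \geq 2$, as desired.

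I do not foresee any essential obstacle; the proof is purely bookkeeping of vanishing orders once \Cref{thmmainCM} and \Cref{lemroot} are in hand. The only delicate point is the holomorphy at $s = 1/2$ of both factors of \eqref{LLL}, so that orders add unambiguously; this rests on standard Rankin--Selberg theory together with $\pi_3 \not\cong \pi$.
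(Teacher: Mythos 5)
Your argument is correct and is essentially the paper's own: the paper likewise deduces $\ord_{s=1/2}L(s,\pi_1\otimes\pi_2\otimes\pi_3)\geq 3$ from \Cref{thmmainCM}, factors via \eqref{LLL}, and uses \Cref{lemroot} together with $L'(1/2,\pi_3)\neq 0$ to pin $\ord_{s=1/2}L(s,\pi_3)=1$, leaving order at least $2$ for the adjoint factor. Your extra remark on holomorphy of the two factors at the central point is harmless bookkeeping that the paper leaves implicit.
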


Now, we consider  the diagonal cycle.
Let $\pi_{i,\rf}  = \pi_{A_i,\iota_i}$ for some pair $(A_i,\iota_i)$ as in \Cref{YZZ322} (2). 
 If
$[\Delta_3]_{f_1\otimes f_2\otimes f_3} \neq 0     $ in $(\iota_1\times \iota_2\times \iota_3)_*\Ch_1(A_1\times A_2\times  A_3)$, 
the generalized Gross--Kudla conjecture and the conjectural nondegeneracy of the height pairing imply that
  $L'(1/2, \pi_1\otimes\pi_2\otimes\pi_3) \neq 0$. 
Then the Beilinson--Bloch conjecture predicts that $(\iota_1\times \iota_2\times \iota_3)_*\Ch_1(A_1\times A_2\times  A_3)$   has dimension 1 and is thus generated by 
  $[\Delta_3]_{f_1\otimes f_2\otimes f_3}$. 
Moreover, by \eqref{LLL},   
$L'(1/2,\pi_3)\neq 0$ (assuming $\ep(1/2,\pi_3)=-1$).  
Then by the  general  Gross--Zagier formula  \cite{GZ,YZZ} and \Cref{prop:van1} (which applies by \Cref{rmkMA} (2)), we propose the following  conjecture.
  
\begin{conj}\label{conj:decom2}

Assume 
$ \pi_1\cong \pi_2^\vee $,
and
$\pi_3$ is   self-dual
with     $ \ep(1/2,\pi_3)=-1$. 
Let $(A,\iota)$ corresponding to $\pi_3$ under \Cref{YZZ322} (2).
  Then  there exists  a   zero cycle $Z$ on $X_U$ such that
$[\Delta_3]_{f_1\otimes f_2\otimes f_3}      $ is a multiple of  $[\Delta_2]_{f_1\otimes f_2} \times \pr_{\iota,*}f_{3,*}[Z]$. Moreover,  we can let $Z$  be  a  CM 0-cycle associated to some quadratic CM extension of $F$.
 
\end{conj}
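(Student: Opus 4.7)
The proof plan combines an extension of \Cref{prop:van1} to pairs of abelian varieties with the arithmetic Gross--Zagier formula for $\pi_3$. Under the sign condition $\ep(1/2,\pi_3)=-1$ and the factorization \eqref{LLL}, the vanishing of $L(s,\pi_1\otimes\pi_2\otimes\pi_3)$ at $s=1/2$ is controlled by $L'(1/2,\pi_3)$ together with $L(1/2,\ad\pi\otimes\pi_3)$; in the regime $L'(1/2,\pi_3)\neq 0$ and $L(1/2,\ad\pi\otimes\pi_3)\neq 0$, the total $L$-function vanishes simply at the center, and the Beilinson--Bloch conjecture predicts that the relevant projected piece of $\Ch_1(A_1\times A_2\times A_3)$ is one-dimensional. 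The strategy is to exhibit this line explicitly as an external product of a 2-cycle on $A_1\times A_2$ and a CM 0-cycle on $A_3$.

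First, I would extend \Cref{prop:van1} to the pair $(A_1,A_2)$ using the duality $\pi_1\cong\pi_2^\vee$ and the $\GL_2$-type statement in \Cref{YZZ322}(1): a direct generalization of the argument from \cite[Corollary 2.2.7(1)]{QiuFP} (checking $\dim\iota_*H^1=2$ and Albanese surjectivity for $f_1\times f_2$) shows that $(\pr_{\iota_1}\times\pr_{\iota_2})_*\Ch_*(A_1\times A_2)$ is one-dimensional, spanned by $[\Delta_2]_{f_1\otimes f_2}$. The external product structure of Chow groups under the triple projector $\iota_1\times\iota_2\times\iota_3$ then forces $[\Delta_3]_{f_1\otimes f_2\otimes f_3}$ to lie in the line spanned by $[\Delta_2]_{f_1\otimes f_2}\times C$ for some $C\in\pr_{\iota,*}\Ch_0(A_3)$. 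Next, I would identify $C$ with a Heegner cycle: by the nonvanishing of a quadratic twisted central derivative $L'(1/2,\pi_3\otimes\chi_K)\neq 0$ for a suitable CM extension $K/F$ satisfying the Heegner hypothesis relative to the ramification of $B$ and the level $U$, the Gross--Zagier formula of Yuan--Zhang--Zhang \cite{YZZ} produces a CM 0-cycle $Z=Z_K$ on $X_U$ for which $\pr_{\iota,*}f_{3,*}[Z_K]$ has nonzero Beilinson--Bloch height, hence is a nonzero element of the 1-dimensional space $\pr_{\iota,*}\Ch_0(A_3)$. This forces $C$ to be a scalar multiple of $\pr_{\iota,*}f_{3,*}[Z_K]$, which is the asserted decomposition.

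The main obstacles are the conjectural ingredients: the Beilinson--Bloch conjecture, guaranteeing that the projected Chow group attains the minimal dimension predicted by the order of vanishing, and the nondegeneracy of the Beilinson--Bloch height pairing, used to convert nonzero heights into nonzero Chow classes. Even granting these, one must construct a CM extension $K/F$ satisfying the Waldspurger--Tunnell local dichotomy at every ramified place of $(B,\pi_3,U)$ and \emph{simultaneously} satisfying $L'(1/2,\pi_3\otimes\chi_K)\neq 0$; decoupling these local and global constraints to produce a usable $K$ is the technical crux of the argument, and while known analytic nonvanishing results for twisted derivatives of self-dual $\GL_2$ $L$-functions of root number $-1$ should suffice in principle, matching the local invariants case by case to the quaternionic realization is the most delicate step.
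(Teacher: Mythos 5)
This statement is a conjecture: the paper offers no proof, only a motivating heuristic, and your proposal reproduces that heuristic almost verbatim (Beilinson--Bloch one-dimensionality of the projected Chow group, \Cref{prop:van1} to pin down the $A_1\times A_2$ factor as the line spanned by $[\Delta_2]_{f_1\otimes f_2}$, and the Gross--Zagier formula with $L'(1/2,\pi_3)\neq 0$ to supply a nonzero CM $0$-cycle class on $A_3$). You correctly identify the same conjectural inputs the paper relies on, so this is the same approach; just be aware that neither your argument nor the paper's constitutes a proof.
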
 

The relation between modified diagonal cycles and CM cycles  was already explored in \cite{DRS}, as well as used \cite{KLQY,LR}.  Our conjecture gives a 
complete  explanation. 
Alternatively, in  an earlier version of \cite{YZZ0}, the authors proposed to  use 
an ample and symmetric line bundle  associated to the polarization $A_1\to A_2\cong A_1^\vee$ to construct 0-cycles that replace CM cycles in this case. See also \cite[5.3]{Zha101}.

Second, if $ \pi_1\cong \pi_2\cong \pi_3=:\pi$ is self-dual, then $$L(s, \pi_1\otimes\pi_2\otimes\pi_3)  = L(s,\sym^3 \pi) L(s,\pi)^2.$$
So if the root number $\ep(1/2, \pi)=-1$, then $L(s, \pi^{\otimes 3})$ has vanishing order at least 2 at $s =1/2$. 
Thus by \Cref{lemroot} and \Cref{corsign2}, \Cref{thmmain} is trivial if $\pi_1\cong \pi_2\cong \pi_3.$ 

For the diagonal cycle, 
motivated by 
the generalized Gross--Kudla conjecture,
we propose the following conjecture.

\begin{conj}\label{conj:decom1}
  If $ \pi_1\cong \pi_2\cong \pi_3=:\pi$ is self-dual and $\ep(1/2, \pi)=-1$, then $[\Delta_3]_{f }     =0$
for  all $f\in \boxtimes_{i=1}^3\pi_{i}^U$. 
  
\end{conj}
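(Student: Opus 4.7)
The plan is to decompose the representation $\boxtimes_{i=1}^3\pi^U$ under the permutation action of $S_3$ on the three tensor factors, and to treat each $S_3$-isotypic component separately. Because $\pi_1\cong\pi_2\cong\pi_3=\pi$ all correspond via \Cref{YZZ322} to the same pair $(A,\iota)$, and the diagonal $[\Delta_3]\subset X_U^3$ is $S_3$-invariant, the map $f\mapsto [\Delta_3]_f$ is $S_3$-equivariant: for $\sigma\in S_3$ acting by permutation on $A^3$, one checks $\sigma_*[\Delta_3]_{f_1\otimes f_2\otimes f_3}=[\Delta_3]_{f_{\sigma^{-1}(1)}\otimes f_{\sigma^{-1}(2)}\otimes f_{\sigma^{-1}(3)}}$, so it suffices to treat each isotypic part of the source.

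For the $S_3$-trivial component $\Sym^3\pi^U$, polarization shows the space is spanned by elementary cubes $f^{\otimes 3}$ with $f\in\pi^U$. For such $f$ the cycle $(f,f,f)_*[X_U]\in\Ch_*(A^3)$ is manifestly $S_3$-invariant. Since $A$ is of $\GL_2$-type by \Cref{YZZ322}(1), we have $\dim \iota_*H^1(A)=2<3$, so \Cref{prop:van0} gives $[\Delta_3]_{f\otimes f\otimes f}=(\pr_\iota^3)_*(f,f,f)_*[X_U]=0$, and hence $[\Delta_3]_f=0$ for every $f\in\Sym^3\pi^U$. In particular, whenever $\dim\pi^U=1$---as for the levels used in \Cref{thmmain}---the whole space $\boxtimes_{i=1}^3\pi^U$ already sits inside $\Sym^3\pi^U$ and the conjecture follows at once; the non-trivial content of \Cref{conj:decom1} is confined to deeper levels where $\dim\pi^U>1$.

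For the remaining isotypic components the Kimura-type input of \Cref{prop:van0} no longer suffices, because the corresponding parts of $(\pr_\iota^3)_*\Ch_1(A^3)$ remain generically non-zero already at the cohomological level. Here the guiding input is the factorization $L(s,\pi^{\otimes 3})=L(s,\sym^3\pi)\,L(s,\pi)^2$, which together with $\ep(1/2,\pi)=-1$ (so $L(1/2,\pi)=0$) forces a double zero at $s=1/2$ and in particular $L'(1/2,\pi^{\otimes 3})=0$. The generalized Gross--Kudla formula proved in \cite{YZZ0} then implies that the Beilinson--Bloch height of the modified diagonal $[\Delta_3]_\xi$ against any such $f$ vanishes; conjectural non-degeneracy of the height pairing on homologically trivial cycles in $(\pr_\iota^3)_*\Ch_1(A^3)$ would upgrade this to $(\pr_\iota^3)_*[\Delta_3]_\xi=0$. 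To descend from the modified diagonal back to $[\Delta_3]_f$, one handles the correction terms in \eqref{eq:mod diag}, whose pushforwards carry factors $\pr_{f_i,*}\xi$; by \eqref{xie2} these vanish provided $\pr_{f_i,*}[\fe]=0$, a condition verifiable through local CM analysis in the spirit of \Cref{corCM}.

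The hard part is therefore the non-symmetric case. An unconditional proof would require either a refinement of \Cref{prop:van0} handling all $S_3$-isotypic parts of $(\pr_\iota^3)_*\Ch_1(A^3)$---perhaps by expressing $[\Delta_3]_f$ as a combination of diagonal-type and CM-type sub-cycles analogous to the explicit decomposition envisaged in \Cref{conj:decom2}---or the full non-degeneracy of the Beilinson--Bloch pairing. It is precisely this passage from automatic height vanishing to cycle-level vanishing that accounts for the statement remaining a conjecture.
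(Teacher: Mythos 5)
Note first that the statement you were given is \Cref{conj:decom1}, which the paper states as a conjecture and does not prove; your proposal, as you yourself acknowledge in the final paragraph, is likewise not a proof, and that is consistent with the paper. The unconditional part of your argument --- $S_3$-equivariance of $f\mapsto[\Delta_3]_f$, reduction of the $\sym^3$-component to elementary cubes $f^{\otimes 3}$ by polarization, and vanishing via \Cref{prop:van0} since $\dim\iota_*H^1(A)=2<3$ --- is exactly the paper's \Cref{propfff}, which the paper explicitly describes as containing only a special case of the conjecture; and your observation that the conjecture follows outright when $\dim\pi^U=1$ is a correct and worthwhile addendum. Your discussion of the remaining isotypic components (the factorization $L(s,\pi^{\otimes 3})=L(s,\sym^3\pi)L(s,\pi)^2$ forcing $L'(1/2,\pi^{\otimes 3})=0$ when $\ep(1/2,\pi)=-1$, combined with the generalized Gross--Kudla formula and the conjectural non-degeneracy of the Beilinson--Bloch height pairing) reproduces precisely the heuristic the paper offers as motivation for the conjecture, so there is no gap relative to the paper: you have correctly separated the provable special case from the genuinely open part.
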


\Cref{prop:van0} (which applies by \Cref{rmkMA} (2))   gives   the following proposition. It contains 
a special case  of \Cref{conj:decom1}.

  \begin{prop}   \label{propfff}    
  If $ \pi_1\cong \pi_2\cong \pi_3=:\pi$,
  for $f\in  \sym^3\pi$,    $[\Delta_3]_{f }     =0$.
     \end{prop}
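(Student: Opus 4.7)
The plan is to reduce directly to \Cref{prop:van0}. Write $A = A_1 = A_2 = A_3$ and let $\iota \in \End(A)_\BC$ be the central idempotent (see \Cref{rmkMA}(2)) corresponding to the embedding $\End(A)_\BQ \hookrightarrow \BC$ that identifies $\pi_\rf = \pi_{A,\iota}$. The goal is to show that for $f \in \sym^3 \pi^U$, the cycle $[\Delta_3]_f \in \Ch_*(A^3)$ is both $S_3$-invariant and fixed by $\pr_\iota^3$; \Cref{prop:van0} then forces it to vanish.

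First, I would verify the $S_3$-invariance. The assignment $f_1 \otimes f_2 \otimes f_3 \mapsto [\Delta_3]_{f_1 \otimes f_2 \otimes f_3}$ is manifestly $S_3$-equivariant, where $S_3$ acts by simultaneously permuting the tensor factors on the domain and permuting the components of $A^3$ on the target; this follows because the diagonal $\Delta_3 \subset X_U^3$ is itself $S_3$-invariant and pushforward along the symmetric correspondence $\pr_{f_1} \otimes \pr_{f_2} \otimes \pr_{f_3}$ intertwines the two actions. Restricting to $\sym^3 \pi^U \subset (\pi^U)^{\otimes 3}$ therefore produces $S_3$-invariant cycles in $\Ch_1(A^3)$.

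Next, I would verify that each $[\Delta_3]_f$ lies in the image of $\pr_\iota^3$. Since every $f_i \in \pi^U$ lies in the $\iota$-isotypic component of $\Mor^0(X_U, A)_\BC$ under post-composition by $\End(A)_\BC$, and since $\iota$ is central (so it commutes with the Deninger--Murre projector $\pr$), a short calculation with composition of correspondences gives $\pr_\iota \circ \pr_{f_i} = \pr_{f_i}$. Tensoring across the three factors yields $(\pr_\iota^3)_* [\Delta_3]_f = [\Delta_3]_f$.

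Finally, the dimension hypothesis of \Cref{prop:van0} holds: by \Cref{YZZ322}(1), $\End(A)_\BQ$ is a number field of degree $\dim A$, and since $A$ is of $\GL_2$-type, $H^1(A)$ is a rank-2 module over it, giving $\dim_\BC \iota_* H^1(A) = 2 < 3 = n$. Applying \Cref{prop:van0} to the $S_3$-invariant cycle $z = [\Delta_3]_f$ yields $(\pr_\iota^3)_* [\Delta_3]_f = 0$, and combined with the isotypic step we conclude $[\Delta_3]_f = 0$. No single step presents a serious obstacle; the proof is essentially an immediate application of \Cref{prop:van0} once the symmetry and isotypic placement are in place.
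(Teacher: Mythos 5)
Your proof is correct and is exactly the argument the paper intends: the paper's proof consists of the single remark that \Cref{prop:van0} (applicable via \Cref{rmkMA}(2)) yields the proposition, and your three steps ($S_3$-invariance of $[\Delta_3]_f$ for symmetric $f$, the identity $(\pr_\iota^3)_*[\Delta_3]_f=[\Delta_3]_f$ from $\iota\circ f_i=f_i$, and $\dim\iota_*H^1(A)=2<3$) are precisely the details being elided. The only nitpick is that $\pr_\iota\circ\pr_{f_i}=\pr_{\iota\circ f_i}$ follows from functoriality of the Deninger--Murre projectors under homomorphisms rather than from centrality of $\iota$ per se, but this does not affect the argument.
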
 
   \begin{rmk}\label{prop:decom1}
   
If   $\ep(1/2, \pi)=1$, the vanishing in the proposition follows from \Cref{generalcor} and a result of Prasad
\cite[Theorem 6]{Pra}. This is 
   purely local and  representation-theoretical. 
    \end{rmk}

\section{Hyperelliptic modular curves }\label{sec:genus2}
 Consider the case of modular curves, i.e., let $B=\RM_2(\BQ).$
A Shimura curve $X_U$ with 
  $U$ satisfying  \eqref{Kv0} is the  quotient $X_0^*(N)$ of the classical modular curve $X_0(N)$ by the full Atkin--Lehner group, for  some  $N$ square free. (See \Cref{final remark} (2) for an adelic definition of Atkin--Lehner involutions.)  
  Let $U(N)\subset \PGL_2(\BA_\rf)$ be the corresponding open compact subgroup. 
  
By 
\cite{Has}, the list of square free $N$'s with $X_0^*(N)$ hyperelliptic  consists of the following 39 numbers:
\begin{equation}
\label{eq:squarefree-list}
\begin{aligned}
&67, 73, 85, 93, 103, 106, 107, 115, 122, 129,\\
&133, 134, 146, 154, 158, 161, 165, 166, 167, 170,\\
&177, 186, 191, 205, 206, 209, 213, 215, 221, 230,\\
&255, 266, 285, 286, 287, 299, 330, 357, 390.
\end{aligned}
\end{equation}
Moreover, these $X_0^*(N)$'s all have genus 2.  Thus we can not apply \Cref{thmmain}. Instead, we want to apply \Cref{thmmainCM}. 
We need the following lemma.

\begin{lem}
\label{fevan}

Let $N$ be square free, 
 Assume
 \begin{itemize}
  \item[(1)] there exists a prime factor $p \mid N$ that is inert in $\mathbb{Q}(i)$, and
  \item[(2)] there exists a (possibly equal) prime factor $q \mid N$ that is inert in $\mathbb{Q}(\sqrt{-3})$.
\end{itemize}
For  a cuspidal  automorphic  representation $\pi$
of 
$\GL_{2,\BQ} $   holomorphic of weight $2$, if  $f\neq 0\in \pi^{U(N)} $, 
then  
 $\pr_{f,*}[\fe] = 0$ on   $X_0^*(N)$.

\end{lem}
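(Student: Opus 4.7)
\emph{Plan.} The argument combines the vanishing of the orbifold structure of $X_0(N)$ (forced by (1) and (2)) with the hyperelliptic involution on $X_0^*(N)$.

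First, a classical computation with optimal embeddings shows that $X_0(N)$ has no elliptic point of order $2$ (respectively $3$): such a point corresponds to an optimal embedding of $\BZ[i]$ (respectively $\BZ[\frac{1+\sqrt{-3}}{2}]$) into an Eichler order of level $N$ in $\RM_2(\BQ)$, and this forces every prime $p\mid N$ to split in $\BQ(i)$ (respectively $\BQ(\sqrt{-3})$). Conditions (1) and (2) obstruct these, so $\fe_{X_0(N)}=0$. Consequently every elliptic $x\in X_0^*(N)$ is the image of a point $P\in X_0(N)$ with non-trivial stabilizer in the Atkin--Lehner group $W=\{w_d:d\mid N\}$ under the finite Galois cover $\pi\colon X_0(N)\to X_0^*(N)$.

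Second, I claim that the stabilizer $H_x\subset W$ at each such $x$ has order exactly $2$. If $w_d\in H_x$ with $d>1$, any lift $P\in\pi^{-1}(x)$ has CM by an order in $K_d:=\BQ(\sqrt{-d})$, and the CM field at a CM point is uniquely determined by $P$; for squarefree $d,d'>1$, the identity $K_d=K_{d'}$ forces $d=d'$. Hence $H_x=\{1,w_d\}$ for a unique $d$ and $r_x=2$, so
\[
\fe=\frac{1}{2}\sum_{x\text{ elliptic}}[x].
\]

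Third, let $\iota$ be the hyperelliptic involution on $X_0^*(N)$ (available in all cases of interest via \eqref{eq:squarefree-list}). Elliptic points on $X_0^*(N)$ are precisely the CM points of the underlying Shimura curve, a distinguished finite subset preserved by any variety automorphism; in particular $\iota$ permutes them. Since all coefficients $1-1/r_x$ now equal $\frac12$, this gives $\iota_*\fe=\fe$, and the canonical class $\lambda$ is likewise $\iota$-fixed. Hence the degree-zero class $[\fe]-(\deg\fe)\lambda\in J(X_0^*(N))$ is $\iota$-invariant. As $\iota_*=-1$ on the Jacobian of any hyperelliptic curve, this class is $2$-torsion. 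Combined with $f_*\lambda=0$ from $f\in\Mor^0$,
\[
\pr_{f,*}[\fe]=f_*\bigl([\fe]-(\deg\fe)\lambda\bigr)\in A(\ol\BQ)
\]
is $2$-torsion and therefore vanishes in the $\BQ$-coefficient Chow group.

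The main obstacle is the intrinsic claim in step three: that the hyperelliptic involution, which in these cases is exceptional (not an Atkin--Lehner involution of $X_0(N)$), setwise permutes the elliptic points of $X_0^*(N)$. Morally this is forced by their characterization as CM points, but a fully rigorous argument may need either a stack-level input---identifying $\iota$ with an automorphism of the moduli problem---or case-by-case verification on the finitely many genus-$2$ curves in \eqref{eq:squarefree-list}, where in practice one checks that the elliptic points all lie in the Weierstrass locus of the hyperelliptic structure.
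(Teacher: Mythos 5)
Your proof takes a genuinely different route from the paper's, and it contains a gap that you yourself flag but that is in fact fatal rather than merely technical. The crucial unproven claim is that the hyperelliptic involution $\iota$ of $X_0^*(N)$ permutes the elliptic points. Your justification --- that elliptic points are ``precisely the CM points,'' a ``distinguished finite subset preserved by any variety automorphism'' --- is not correct: the CM points of a Shimura curve form an infinite set, and the elliptic points are only a particular finite subset of them (here, images of Atkin--Lehner fixed points). More importantly, an \emph{exceptional} automorphism, which is exactly what $\iota$ is in the cases of interest, has no a priori compatibility with the moduli interpretation and need not preserve any modularly distinguished finite set; the classical example of $X_0(37)$, whose exceptional automorphism fails to preserve the set of cusps, shows that such claims genuinely require proof. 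Without $\iota_*\fe=\fe$ the rest of your step three (the $2$-torsion argument on the Jacobian, which is otherwise fine) does not get off the ground. A secondary point: the lemma is stated for all squarefree $N$ satisfying (1) and (2), whereas your argument only applies when $X_0^*(N)$ is hyperelliptic.

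The paper's proof uses none of this. It observes that $\fe$ is a sum of CM divisors and kills each CM constituent \emph{locally}: writing $E$ for the relevant imaginary quadratic field, the class $\pr_{f,*}$ of any CM divisor by $E$ is controlled by the space $\Hom_{E(\BA_\rf)^\times}(\pi_\rf,\BC)$ (the formulation of \cite[1.3.2]{YZZ}), and at a prime $p\mid N$ inert in $E$ the local component $\pi_p$ is an unramified twist of Steinberg whose Jacquet--Langlands transfer to the local division algebra visibly admits an $E_p^\times$-invariant functional; the Tunnell--Saito dichotomy then forces $\Hom_{E_p^\times}(\pi_p,\BC)=0$, hence the global Hom space vanishes and $\pr_{f,*}$ annihilates the CM divisor. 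This is exactly the ``verification by local conditions'' advertised in the remark after \Cref{corCM}; it requires no knowledge of the hyperelliptic involution and works uniformly in $N$. Your steps one and two (the vanishing of $\fe_{X_0(N)}$ under (1)--(2), and the identification of the elliptic points of $X_0^*(N)$ as order-$2$ points coming from Atkin--Lehner fixed points, CM by the various $\BQ(\sqrt{-d})$) are correct and are a useful, more explicit description of $\fe$ than the paper gives, but they do not substitute for the missing invariance statement.
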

\begin{proof}
The divisor of elliptic points has a decomposition $\fe = \fe_2+\fe_3$ into a CM divisor with CM by $\mathbb{Q}(i)$ and $\mathbb{Q}(\sqrt{-3})$.
We show $\pr_{f,*}[\fe_2] = \pr_{f,*}[\fe_3] = 0$.  We prove the first equation and the proof of the second is the same. 

Let $E = \mathbb{Q}(i) $ embedded in $B = \RM_2(\BQ)$, uniquely up to conjugation.
Using   the formulation in \cite[1.3.2]{YZZ}, we only need to show  
$\Hom_ {E_p^\times} (\pi_p,\BC) = 0.$ 
 Indeed,
 $\pi_p$  is the
the Steinberg representation twisted by the unique nontrivial unramified quadratic character of $\BQ_p^\times$. 
Let $D_p$ be the  unique division quaternion algebra over $\BQ_p$ and $E_p$ embedded in $D_p$, uniquely up to conjugation.
The  Jacquet--Langlands correspondence of  $\pi_p$ to $D_p^\times$  is the unique nontrivial  (quadratic) character $\chi$ of
  $D_p^\times/\BQ_p^\times O_p^\times\cong \BZ/2\BZ $, where $O_p$ is a  maximal order of $D_p$.  
  Since $\Hom_ {E_p^\times} (\chi,\BC) \neq  0$  by assumption (1), by the Tunnell--Saito dicohotmy  \cite[Theorem 1.3]{YZZ}, 
$\Hom_ {E_p^\times} (\pi_p,\BC) = 0.$  The lemma is proved.
\end{proof}

 In $\mathbb{Q}(i)$, a prime $p$ is inert if and only if 
  $p \equiv 3 \pmod{4}$. In $\mathbb{Q}(\sqrt{-3})$, a prime $q$ is inert if and only if 
  $q \equiv 2 \pmod{3}$.  
  Among the $39$ squarefree $N$ in the list, the following $26$ contain 
at least one prime factor $p \equiv 3 \pmod{4}$ and at least one prime factor 
$q \equiv 2 \pmod{3}$.
\begin{table}[h]
\centering
\caption{Squarefree $N$ in \Cref{eq:squarefree-list}
 with prime factors inert in   $\mathbb{Q}(i)$ and prime factors inert in   $\mathbb{Q}(\sqrt{-3})$.}
\label{tab:inert-primes}
\[
\begin{array}{|c|c|c|}
\hline
N & \text{Inert in } \mathbb{Q}(i) & \text{Inert in } \mathbb{Q}(\sqrt{-3}) \\
\hline
107 & \{107\} & \{107\} \\
115 & \{23\} & \{5,23\} \\
134 & \{67\} & \{2\} \\
154 & \{7,11\} & \{2,11\} \\
158 & \{79\} & \{2\} \\
161 & \{7,23\} & \{23\} \\
165 & \{3,11\} & \{5,11\} \\
166 & \{83\} & \{2,83\} \\
167 & \{167\} & \{167\} \\
177 & \{3,59\} & \{59\} \\
186 & \{3,31\} & \{2\} \\
191 & \{191\} & \{191\} \\
206 & \{103\} & \{2\} \\
209 & \{11,19\} & \{11\} \\
213 & \{3,71\} & \{71\} \\
215 & \{43\} & \{5\} \\
230 & \{23\} & \{2,5,23\} \\
255 & \{3\} & \{5,17\} \\
266 & \{7,19\} & \{2\} \\
285 & \{3,19\} & \{5\} \\
286 & \{11\} & \{2,11\} \\
287 & \{7\} & \{41\} \\
299 & \{23\} & \{23\} \\
330 & \{3,11\} & \{2,5,11\} \\
357 & \{3,7\} & \{17\} \\
390 & \{3\} & \{2,5\} \\
\hline
\end{array}
\]
\end{table}

\begin{thm}\label{thm:ord2}

For $N$ in  \Cref{tab:inert-primes}, let $f,g$ be (the only) two distinct newforms of level $\Gamma_0(N)$ and Atkin--Lehner sign $+1$ 
at all primes dividing $N$.  
Then 
 $$\ord_{s=2}L\lb s, \sym^2 f \otimes  g  \rb\geq  2.$$

\end{thm}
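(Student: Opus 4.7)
The plan is to apply Proposition~\ref{rmkdecom} (i.e.\ Theorem~\ref{thmmainCM} specialized to $\pi_1\cong\pi_2$) and then extract the degree-$6$ factor from the triple-product $L$-function.

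First, I would translate to the automorphic side. Let $\pi$ and $\pi'$ be the cuspidal automorphic representations of $\GL_{2,\BQ}$ attached to $f$ and $g$; both are self-dual with trivial central character. Since $f$ and $g$ are $\Gamma_0(N)$-newforms with Atkin--Lehner eigenvalue $+1$ at every $p\mid N$, we have $\pi^{U(N)}\neq 0$ and $(\pi')^{U(N)}\neq 0$, where $U(N)\subset\PGL_2(\BA_{\rf})$ is the level associated to $X_0^*(N)$. This $U(N)$ satisfies \eqref{Kv} (vacuously, as $S_B=\emptyset$ for $B=\RM_2(\BQ)$) and \eqref{Kv0}, and since $N$ lies in the list \eqref{eq:squarefree-list}, every geometric component of $X_0^*(N)$ is hyperelliptic; moreover $f\neq g$ gives $\pi\not\cong \pi'$.

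Second, I would verify the CM-cycle vanishing $\pr_{f_3,*}[\fe]=0$, which is the last hypothesis of Theorem~\ref{thmmainCM} in the setting $\pi_1=\pi_2=\pi$, $\pi_3=\pi'$. By construction every $N$ in Table~\ref{tab:inert-primes} admits a prime $p\mid N$ inert in $\BQ(i)$ and a prime $q\mid N$ inert in $\BQ(\sqrt{-3})$, which are exactly the hypotheses of Lemma~\ref{fevan}; applied to $g$ the lemma yields $\pr_{f_3,*}[\fe]=0$ for any $0\neq f_3\in(\pi')^{U(N)}$, and Theorem~\ref{thmmainCM} then produces
\[
\ord_{s=1/2} L(s,\pi\otimes\pi\otimes\pi')\;\geq\;3.
\]

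Third, I would appeal to the factorization
\[
L(s,\pi\otimes\pi\otimes\pi')\;=\;L(s,\sym^2\pi\otimes\pi')\cdot L(s,\pi'),
\]
which holds because $\pi$ is self-dual with trivial central character (so $\sym^2\pi=\ad\pi$, matching \eqref{LLL}). By Lemma~\ref{lemroot}, $\ep(1/2,\pi')=-1$, so $\ord_{s=1/2} L(s,\pi')$ is odd and $\geq 1$; if it equals $1$, then the inequality above forces $\ord_{s=1/2} L(s,\sym^2\pi\otimes\pi')\geq 2$, which in the arithmetic normalization is the desired $\ord_{s=2} L(s,\sym^2 f\otimes g)\geq 2$.

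The main obstacle is precisely the non-vanishing $L'(1/2,g)\neq 0$ used at the last step, exactly as in the hypothesis of Proposition~\ref{rmkdecom}: otherwise a high-order vanishing of $L(s,g)$ could absorb all the excess vanishing of the triple product. For each of the $26$ levels in Table~\ref{tab:inert-primes} the conductor is modest and the relevant newform $g$ is tabulated, so $L'(1/2,g)\neq 0$ is a finite numerical check (via modular symbols or LMFDB), and the theorem is understood to incorporate this case-by-case verification.
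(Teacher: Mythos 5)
Your proposal is correct and follows essentially the same route as the paper: Lemma~\ref{fevan} gives the vanishing $\pr_{f_3,*}[\fe]=0$, Theorem~\ref{thmmainCM} then yields $\ord_{s=1/2}L(s,\pi\otimes\pi\otimes\pi')\geq 3$, and the factorization argument you spell out is exactly the content of Proposition~\ref{rmkdecom}, with the remaining input $L'(1/2,g)\neq 0$ verified from LMFDB. The only difference is presentational: you unwind the proof of Proposition~\ref{rmkdecom} rather than citing it.
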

\begin{proof} 
By \Cref{fevan},  \Cref{thmmainCM} applies.  By \cref{rmkdecom}, we only need to show that 
$L'(1,g)\neq 0$. This can be read from LMFDB \cite{LMFDB}.
\end{proof}

\section{An intriguing hyperelliptic Shimura curve of  Demb\'el\'e }\label{Demcurve}

 Let   $F=\BQ(\zeta_{32}+\zeta_{32}^{-1})$, the maximal totally real subfield of the cyclotomic field of 32nd roots of unity.
In this section, we describe 
 a hyperelliptic Shimura curve $X_U$ over $F$  found by Demb\'el\'e \cite[5.6]{Dem}. 
We first clarify that in \cite[5.6]{Dem}, a Shimura curve  is of the form $X_V$ with $V$ open compact (instead of compact-modulo-center as in our paper) in $B^\times(\BA_\rf)$.
 And the  hyperelliptic curve constructed in loc. cit. is an Atkin--Lehner quotient of such a Shimura curve, which is still a Shimura curve in our sense.

Let us be more explicit. 
Let $\tau$ be any of the 8 infinite place of $F$. 
Let $\fp$ be the unique prime of $F$ above 2. To keep notations close to \cite{Dem}, let us use  $D$ to denote the quaternion algebra over $F$ being divison exactly at  $\fp$ and  all  infinite places of $F$ except $\tau$.
 Let $V  =\prod\limits_{v\text{ finite}} V_v \subset D^\times(\BA_\rf)$ be a    maximal open compact subgroup.
In other words,
its   $\fp$-component is 
          \begin{equation*} 
V_\fp= O_{\fp }^\times  ,
 \end{equation*}
 where $O_\fp$ is a  maximal order of $D_\fp$,
 and 
             \begin{equation*} 
V_v=\GL_2\lb \cO_{F_v}\rb  
\text{ for } v\neq\fp.
 \end{equation*}
Since the narrow class number of $F$ is 1, $X_V$ is geometrically connected and $X_V\to X_{V \BA_\rf^\times}$ is an isomorphism. Here 
$\BA_\rf^\times$ is the center of $D^\times(\BA_\rf)$. 
This is the Shimura curve 
$X_0^D(1) $ in  \cite[Introduction, 4.1]{Dem}.
The generator of $D_\fp^\times/F_\fp^\times O_{\fp }^\times\cong \BZ/2\BZ $  normalizes $V_\fp F_\fp^\times$ and thus defines an automorphism of 
$X_{V \BA_\rf^\times}$ of order 2.
(For example we can take the generator to be the uniformizer of a ramified quadratic extension of $F_\fp$ in $D_\fp$.)
This is the Atkin--Lehner involution $w_D$ in  \cite[Introduction, 5.4]{Dem}.
In particular, the Atkin--Lehner quotient
$X_0^D(1)/\pair{w_D}$ in  \cite[Introduction, 5.4]{Dem}
is $X_U$ with 
  \begin{equation*} 
U_\fp= PD_{\fp }^\times  
 \end{equation*}
 and 
             \begin{equation*} 
U_v=F_v^\times \GL_2\lb \cO_{F_v}\rb  
\text{ for } v\neq \fp. 
 \end{equation*}

  \begin{thm}[{\cite[Theorem 5.9]{Dem}}]The curve $X_U$
    is hyperelliptic over F.
    
  \end{thm}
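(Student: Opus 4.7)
The plan is to proceed computationally, following \cite{Dem}. First, I would pin down the genus $g$ of $X_U$. By the Jacquet--Langlands correspondence, and using that $F$ has narrow class number $1$, the space $H^0(X_U,\Omega^1)$ is identified with the $+1$-eigenspace of $w_D$ acting on the space of Hilbert cuspidal newforms over $F$ of parallel weight $2$ and level $\fp$. Because $D$ is definite away from $\tau$, this latter space admits a concrete presentation via the Brandt module attached to a maximal order of $D$, which gives both the Hecke action and the Atkin--Lehner involution as explicit matrices; Demb\'el\'e's algorithms carry this out.

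With $g$ in hand, I would exhibit a hyperelliptic involution $\sigma$ of $X_U$. Since the Atkin--Lehner group of $X_0^D(1)$ is generated by $w_D$ and has already been used to form $X_U$, any such $\sigma$ must be \emph{exceptional}, i.e.\ not of Atkin--Lehner type. Two complementary routes pin it down. One is to compute a basis of $H^0(X_U,\Omega^1)$ using the Cerednik--Drinfeld $\fp$-adic uniformization of $X_0^D(1)$ together with descent by $w_D$, and then check that the canonical image in $\BP^{g-1}$ is a rational normal curve (equivalent to hyperellipticity when $g\geq 3$). The other is to construct an explicit model $y^2 = h(x) \in F[x]$ for $X_U$ from period matrices of the Jacobian and certify it by matching its Hecke eigenvalues, via point counts modulo primes of good reduction of the explicit model, against those of the modular forms computed in the first step.

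The main obstacle will be certifying the hyperelliptic structure rigorously. Numerical agreement of periods or of canonical syzygies is not a proof; one must either realize $\sigma$ as an endomorphism compatible with the Hecke action --- forcing it to act by $-1$ on the appropriate isotypic components of $H^0(X_U,\Omega^1)$ --- or match an explicit equation to Hecke data at sufficiently many places to pin down the isomorphism class of $X_U$. This is precisely what is accomplished in \cite[Theorem 5.9]{Dem}.
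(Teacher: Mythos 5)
This theorem is quoted verbatim from Demb\'el\'e \cite[Theorem 5.9]{Dem}; the paper supplies no proof of its own (it only remarks that hyperellipticity over $\BC$ suffices for its purposes), so your proposal---an outline of Demb\'el\'e's computational strategy that ultimately defers to \cite{Dem} for the rigorous certification---is in substance the same approach. Your sketch is broadly accurate, modulo two small slips worth flagging: the Brandt-module computation of the Hilbert newform space uses the \emph{totally definite} quaternion algebra over $F$ (not a maximal order of $D$, which is split at $\tau$ and hence not amenable to Brandt modules), and the relevant newforms are those with $w$-eigenvalue $-1$ on the $\GL_2$ side, which correspond to $w_D$-eigenvalue $+1$ vectors only after the sign flip under Jacquet--Langlands at $\fp$.
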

We only need the hyperellipticity over $\BC$.

By \cite[5.3]{Dem}, 
there are 40 holomorphic  Hilbert
 newforms  of level $\Gamma_0(\fp)$ and parallel weight 2, and they form  5 Galois orbits of sizes  4, 4, 4, 4 and 24 respectively.
Moreover, let $w$ be the Atkin--Lehner involution acting on the $\Gamma_0(\fp)$-Hilbert
 modular forms. Then the newforms in the four Galois orbits of sizes 4 are exactly the ones with 
$w$-eigenvalue $-1$.
Their Jacquet--Langlands correspondences to $D^\times$   exactly the ones with 
$w_D$-eigenvalue $1$. 
So these Jacquet--Langlands correspondences  descend to  $X_{U,\BC} $ and span the space of
  holomorphic differential 1-forms on  $X_{U,\BC} $, which has genus 16.
 Then by \Cref{thmmain}, we have the following.

 \begin{thm}\label{exmain}  
Consider    the   16 holomorphic 
  Hilbert
 newforms  of level $\Gamma_0(\fp)$ and parallel weight 2 with $w$-eigenvalue $-1$. Let $f_1 , f_2, f_3$ be any  three distinct ones among them. Then
   $$\ord_{s=2}L\lb s, f_1\otimes  f_2\otimes  f_3 \rb\geq  3.$$

  \end{thm}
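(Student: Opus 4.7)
The plan is to apply \Cref{thmmain} directly to the Shimura curve $X_U$ and quaternion algebra $D$ introduced in \Cref{Demcurve}. Once its three hypotheses are checked for the representations $\pi_i$ of $D^\times$ arising via Jacquet--Langlands from the newforms $f_i$, the conclusion $\ord_{s=1/2}L(s,\pi_1\otimes\pi_2\otimes\pi_3)\ge 3$ translates, via the standard shift $s\mapsto s-3/2$ between the automorphic normalization and the arithmetic normalization of a weight-$2$ triple product, into the desired bound $\ord_{s=2}L(s,f_1\otimes f_2\otimes f_3)\ge 3$.

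The level shape of $U$ is immediate from \Cref{Demcurve}: at $\fp\in S_D$ we have $U_\fp=PD_\fp^\times$, matching \eqref{Kv}, and at each $v\ne\fp$ we have $U_v=F_v^\times\GL_2(\cO_{F_v})$, the unramified option in \eqref{Kv0}. The hyperellipticity of every geometrically connected component of $X_U$ is Dembélé's Theorem~5.9 recalled in \Cref{Demcurve}, together with the observation (also in \Cref{Demcurve}) that $X_U$ is itself geometrically connected since $F$ has narrow class number $1$.

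The substantive step is to verify $\pi_i^U\neq 0$ for each $i=1,2,3$. The newform $f_i$ has level $\Gamma_0(\fp)$ and parallel weight $2$, so its associated automorphic representation $\pi_i'$ on $\GL_{2,F}$ is unramified with trivial central character at each $v\ne\fp$. Its Jacquet--Langlands transfer $\pi_i$ to $D^\times$ satisfies $\pi_{i,v}\cong\pi_{i,v}'$ for such $v$ (since $D_v$ is split), and hence $\pi_{i,v}^{U_v}$ is one-dimensional. At $\fp$, the condition that $f_i$ has $w$-eigenvalue $-1$ forces $\pi_{i,\fp}'$ to be the Steinberg representation, whose Jacquet--Langlands transfer is the trivial representation of $D_\fp^\times$; this is plainly fixed by $U_\fp=PD_\fp^\times$, so $\pi_{i,\fp}^{U_\fp}\neq 0$. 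Combining local components gives $\pi_i^U\neq 0$. Pairwise non-isomorphism of the $\pi_i$'s follows from distinctness of the $f_i$'s as Hecke eigenforms, via strong multiplicity one and injectivity of Jacquet--Langlands.

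The expected obstacle is essentially bookkeeping rather than conceptual: keeping straight the dictionary between the Atkin--Lehner sign $w=\pm 1$ on the $\GL_{2,F}$-side and the $w_D$-eigenvalue on $D^\times$, and matching these to the choice $U_\fp=PD_\fp^\times$. This dictionary is precisely the assertion recorded immediately before the theorem statement, namely that $w$-eigenvalue $-1$ corresponds under Jacquet--Langlands to $w_D$-eigenvalue $+1$ (equivalently, to the trivial character of $D_\fp^\times/F_\fp^\times \cO_\fp^\times$), which is exactly what descends from $X_0^D(1)$ to the Atkin--Lehner quotient $X_U$. With this in hand, all three hypotheses of \Cref{thmmain} are satisfied, and \Cref{exmain} follows.
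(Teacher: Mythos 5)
Your proposal is correct and follows essentially the same route as the paper: identify Dembélé's curve as $X_U$ with $U$ of the shape \eqref{Kv}, \eqref{Kv0}, invoke its hyperellipticity, verify $\pi_i^U\neq 0$ via the dictionary between $w$-eigenvalue $-1$ at $\fp$ and the trivial representation of $D_\fp^\times$ under Jacquet--Langlands, and then apply \Cref{thmmain}. The paper phrases the nonvanishing step as the statement that the $16$ forms with $w_D$-eigenvalue $+1$ descend to and span the holomorphic differentials of the genus-$16$ curve $X_{U,\BC}$, but this is the same verification you carry out locally.
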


 \Cref{thmintro} follows from \Cref{exmain}.


\begin{thebibliography}{99}  
                
                     
 \bibitem{DRS}   Darmon, Henri, Victor Rotger, and Ignacio Sols. \textit{Iterated integrals, diagonal cycles and rational points on elliptic curves.} Publications math\'ematiques de Besancon. Algebre et th\'eorie des nombres 2 (2012): 19-46.                         
 \bibitem{DM}      Deninger, Christopher  and Jacob Murre. \textit{Motivic decomposition of abelian schemes and the Fourier transform.} Journal f\"ur die reine und angewandte Mathematik 422 (1991): 201-219.
                       
                        
                    
                                                               
              
                                    \bibitem{Dem}                       Demb\'el\'e, Lassina. \textit{An intriguing hyperelliptic Shimura curve quotient of genus 16.} Algebra $\&$ Number Theory 14.10 (2020): 2713-2742.
                 \bibitem{Gar}        Garrett, Paul B.  \textit{Decomposition of Eisenstein series: Rankin triple products.}Annals of Mathematics 125.2 (1987): 209-235.

\bibitem{Gold}    Goldfeld, Dorian. \textit{The Gauss class number problem for imaginary quadratic fields.} Heegner points and Rankin L-series 49 (2004): 25-36.
                      
                       \bibitem{GK}  Gross, Benedict H., and Stephen S. Kudla. \textit{Heights and the central critical values of triple product $ L $-functions.}Compositio Mathematica 81.2 (1992): 143-209.
     \bibitem{GKZ}   Gross, Benedict H., Winfried Kohnen, and Don Zagier. \textit{Heegner points and derivatives of L-series. II.} Mathematische Annalen 278 (1987): 497-562.

     \bibitem{GP}Gross, Benedict H., and Dipendra Prasad. \textit{Test vectors for linear forms.} Mathematische Annalen 291.1 (1991): 343-355.
                    \bibitem{GS}      Gross, Benedict H., and Chad Schoen. \textit{The modified diagonal cycle on the triple product of a pointed curve.} Annales de l'institut Fourier. Vol. 45. No. 3. 1995.
                
\bibitem{GZ}Gross, Benedict H., and Don B. Zagier.  \textit{Heegner points and derivatives of $L$-series}. Inventiones mathematicae 84.2 (1986): 225-320. 
\bibitem{Has}Hasegawa, Yuji. \textit{Hyperelliptic modular curves $ X^* _0 (N) $.} Acta Arithmetica 81.4 (1997): 369-385.
\bibitem{KLQY}Kerr, Matt, Wanlin Li, Congling Qiu, and Tonghai Yang. \textit{Non-vanishing of Ceresa and Gross--Kudla--Schoen cycles associated to modular curves.} arXiv preprint arXiv:2407.20998 (2024). 
   \bibitem{Kim}  Kimura, Shun-Ichi.  \textit{Chow groups are finite dimensional, in some sense.} Mathematische Annalen 331 (2005): 173-201.
              \bibitem{LMFDB}LMFDB Collaboration, \textit{The L-functions and modular forms database, http://www.lmfdb.org.}

 \bibitem{LR}    
Lupoian, Elvira, and James Rawson. \textit{Ceresa Cycles of $ X_ {0}(N) $.} arXiv preprint arXiv:2501.14060 (2025).
 
   
         \bibitem{PSR}    Piatetski-Shapiro, Ilya, and Stephen Rallis.  \textit{Rankin triple $L$-functions.} Compositio Mathematica 64.1 (1987): 31-115.
          
    \bibitem{Pra}  Prasad, Dipendra. \textit{Trilinear forms for representations of $\mathrm {GL}(2) $ and local $\epsilon $-factors.} Compositio Mathematica 75.1 (1990): 1-46.
        \bibitem{Pra1}  Prasad, Dipendra. \textit{Relating invariant linear form and local epsilon factors via global methods.} Duke Mathematical Journal 138.2 (2007): 233-261.
 
    \bibitem{Qiufin}  Qiu, Congling. \textit{Finiteness properties for Shimura curves and modified diagonal cycles.} arXiv preprint arXiv:2310.20600 (2023).

    \bibitem{QiuFP}  Qiu, Congling. \textit{Faber--Pandharipande cycle, real multiplication and torsion points.} arXiv preprint arXiv:2409.08989v2 (2024)

      \bibitem{QZ1}
    Qiu, Congling, and Wei Zhang. \textit{Vanishing results in Chow groups for the modified diagonal cycles.} Tunisian Journal of Mathematics 6.2 (2024): 225-247.
 
          \bibitem{QZ2} Qiu, Congling, and Wei Zhang. \textit{Vanishing results in Chow groups for the modified diagonal cycles II: Shimura curves.} Preprint, available at https://arxiv. org/abs/2310.19707 v1 (2023). 
            \bibitem{Schm} Schmidt, Ralf. \textit{Some remarks on local newforms for $\GL (2)$.} JOURNAL-RAMANUJAN MATHEMATICAL SOCIETY 17.2 (2002): 115-147.

        
                  \bibitem{YZZ0}  Yuan, Xinyi, Shouwu Zhang, and Wei Zhang. \textit{Triple product L-series and Gross--Kudla--Schoen cycles.} In progress.
     \bibitem{YZZ}Yuan, Xinyi, Shouwu Zhang, and Wei Zhang. \textit{The Gross-Zagier formula on Shimura curves.} No. 184. Princeton University Press, 2013.
 %
  

       \bibitem{Zag}Zagier, Don.  \textit{L-series of elliptic curves, the Birch-Swinnerton-Dyer conjecture, and the class number problem of Gauss.} Notices Amer. Math. Soc 31.7 (1984): 739-743.
      \bibitem{Zha10}Zhang, Shou-Wu. \textit{Gross--Schoen cycles and dualising sheaves.} Inventiones mathematicae, 1.179 (2010): 1-73.

  \bibitem{Zha101}   Zhang, Shou-Wu. \textit{Arithmetic of Shimura curves.} Science China Mathematics 53.3 (2010): 573-592.
                                                       \end{thebibliography}
\end{document}